\theoremstyle{plain}
\newtheorem{theorem}{Theorem}
\newtheorem{lemma}{Lemma}
\newtheorem{corollary}{Corollary}
\theoremstyle{definition}
\newtheorem{example}{Example}
\newtheorem*{remark}{Remark}
\newcommand{\NN}{\mathbb{N}}
\newcommand{\ZZ}{\mathbb{Z}}
\newcommand{\QQ}{\mathbb{Q}}
\newcommand{\D}{\mathbf{D}}
\newcommand{\DD}{\mathbb{D}}
\newcommand{\DB}{\mathfrak{D}}
\newcommand{\MM}{\mathcal{M}}
\newcommand{\LB}{\mathcal{L}}
\newcommand{\SP}{\mathscr{S}}
\newcommand{\BP}{\mathscr{B}}
\DeclareMathOperator{\denom}{denom}
\DeclareMathOperator{\lcm}{lcm}
\DeclareMathOperator{\rad}{rad}
\DeclareMathOperator{\pval}{v\mspace{-1.5mu}}
\newcommand{\dmid}{\parallel}
\newcommand{\textdef}[1]{\emph{#1}}
\begin{document}

\title[The denominators of power sums]{The denominators of power sums of\\
arithmetic progressions}
\author{Bernd C. Kellner and Jonathan Sondow}
\address{G\"ottingen, Germany}
\email{bk@bernoulli.org}
\address{New York, USA}
\email{jsondow@alumni.princeton.edu}
\subjclass[2010]{11B68 (Primary), 11B83 (Secondary)}
\keywords{Denominator, power sum, arithmetic progression,
Bernoulli numbers and polynomials, sum of base-$p$ digits}

\begin{abstract}
In a recent paper the authors studied the denominators of polynomials that
represent power sums by Bernoulli's formula.
Here we extend our results to power sums of arithmetic progressions.
In particular, we obtain a simple explicit criterion for integrality of the
coefficients of these polynomials. As applications, we obtain new results on
the sequence of denominators of the Bernoulli polynomials.
A consequence is that certain quotients of successive denominators are
infinitely often integers, which we characterize.
\end{abstract}

\maketitle

%%%%%%%%%%%%%%%%%%%%%%%%%%%%%%%%%%%%%%%%%%%%%%%%%%%%%%%%%%%%%%%%%%%%%%%%%%%%%%%%
% SEC: INTRODUCTION
%%%%%%%%%%%%%%%%%%%%%%%%%%%%%%%%%%%%%%%%%%%%%%%%%%%%%%%%%%%%%%%%%%%%%%%%%%%%%%%%

\section{Introduction}

For positive integers $n$ and $x$, define the \textdef{power sum}
$$
  S_n(x) := \sum_{k=0}^{x-1} k^n = 0^n + 1^n + \dots + (x-1)^n,
$$
and for integers $m \geq 1$ and $r \geq 0$ define the more general
\textdef{power sum of an arithmetic progression}
$$
  \SP_{m,r}^n(x) := \sum_{k=0}^{x-1} (km+r)^n = r^n + (m+r)^n + \dots + ((x-1)m+r)^n.
$$
In particular, we have $\SP_{1,0}^n(x) = S_n(x)$ and, more generally,
\begin{equation} \label{eq:sum-ap-0-Sn}
  \SP_{m,0}^n(x) = m^nS_n(x).
\end{equation}

Bazs\'o et al.\ \cite{Bazso&Mezo:2015,BPS:2012} considered the
generalized Bernoulli formula
\begin{equation} \label{eq:sum-ap-formula}
  \SP_{m,r}^n(x) = \frac{m^n}{n+1} \, \mleft(
    B_{n+1} \mleft( x + \frac{r}{m} \mright) - B_{n+1} \mleft( \frac{r}{m} \mright)
    \mright),
\end{equation}
where the $n$th Bernoulli polynomial $B_n(x)$ is defined by the series
$$
  \frac{te^{xt}}{e^t - 1} = \sum_{n=0}^{\infty} B_n(x) \frac{t^n}{n!}
  \quad (|t| < 2\pi)
$$
and is given by the formula
\begin{equation} \label{eq:bernpoly-def}
  B_n(x) = \sum_{k=0}^{n} \binom{n}{k} B_k \, x^{n-k},
\end{equation}
$B_k = B_k(0) \in \QQ$ being the $k$th Bernoulli number.
Thus, $\SP_{m,r}^n(x)$ is a polynomial in $x$ of degree $n+1$ with rational
coefficients.

\begin{remark}
Bazs\'o et al.\ required $r$ and $m$ to be coprime. However, since the forward
difference $\Delta B_n(x) := B_n(x+1) - B_n(x)$ equals $n x^{n-1}$
(cf.~\cite[Eq.~(5), p.~18]{Norlund:1924}), the telescoping sum of these
differences with $x = k + \frac{r}{m}$~implies \eqref{eq:sum-ap-formula}
at once for any $r/m \in \QQ$.
\end{remark}

For a polynomial \mbox{$f(x) \in \QQ[x]$}, define its \textdef{denominator},
denoted by $\denom \bigl( f(x) \bigr)$, to be the smallest $d \in \NN$
such that \mbox{$d \cdot f(x) \in \ZZ[x]$}.
This includes the usual definition of $\denom(q)$ for $q \in \QQ$.

In the classical case of Bernoulli's formula
$$
  S_n(x) = \frac{1}{n+1} \bigl( B_{n+1}(x) - B_{n+1} \bigr),
$$
the authors \cite[Thms.~1 and~2]{Kellner&Sondow:2017} determined the
denominator of the polynomial $S_n(x)$. From now on, let $p$ denote a prime.

\begin{theorem}[Kellner and Sondow \cite{Kellner&Sondow:2017}] \label{thm:denom-bernpoly}
For $n \geq 1$, denote
\begin{equation} \label{eq:Dn-def}
  \DD_n := \denom \bigl( B_n(x) - B_n \bigr).
\end{equation}
Then we have the relation
$$
  \denom \bigl( S_n(x) \bigr) = (n+1) \, \DD_{n+1}
$$
and the remarkable formula
\begin{equation} \label{eq:Dn-formula}
  \DD_n = \prod_{ \substack{
    p \, \leq \, \MM_n \\
    s_p(n) \, \geq \, p}} p
    \quad \text{with} \quad
  \MM_n :=
    \begin{cases}
      \, \dfrac{n+1}{2}, & \text{if $n$ is odd,} \\[0.5em]
      \, \dfrac{n+1}{3}, & \text{if $n$ is even,}
    \end{cases}
\end{equation}
where $s_p(n)$ denotes the sum of the base-$p$ digits of~$n$, as defined
in Section~\ref{sec:proofs2}. Moreover,
\begin{equation} \label{eq:Dn-odd}
  \DD_n \text{ is odd} \quad \iff \quad n = 2^k \;\; (k \geq 0).
\end{equation}
\end{theorem}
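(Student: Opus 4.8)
The plan is to reduce all three assertions to a single prime-by-prime criterion, namely that $p \mid \DD_n$ if and only if $s_p(n) \ge p$, and then read off the two displayed formulas and the parity statement from it. I begin with the denominator relation. By Bernoulli's formula $S_n(x) = \frac{1}{n+1}\bigl(B_{n+1}(x) - B_{n+1}\bigr)$, so writing $g(x) := B_{n+1}(x) - B_{n+1} = \sum_j c_j x^j$ we have $(n+1)S_n(x) = g(x)$, and by \eqref{eq:bernpoly-def} the polynomial $g$ is monic of degree $n+1$ (leading coefficient $\binom{n+1}{0}B_0 = 1$) with zero constant term. For each prime $p$ the denominator valuation of a polynomial is $\pval_p(\denom f) = \max\bigl(0, -\min_j \pval_p(c_j)\bigr)$; monicity forces $\min_j \pval_p(c_j) \le 0$, so $\pval_p(\DD_{n+1}) = -\min_j \pval_p(c_j)$, while dividing every coefficient by $n+1$ lowers each valuation by $\pval_p(n+1)$. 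Hence $\pval_p(\denom S_n) = \pval_p(n+1) - \min_j \pval_p(c_j) = \pval_p(n+1) + \pval_p(\DD_{n+1})$, which is exactly $\denom(S_n(x)) = (n+1)\,\DD_{n+1}$.

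For the product formula I would first expand \eqref{eq:bernpoly-def} to get $B_n(x) - B_n = \sum_{k=0}^{n-1}\binom nk B_k\, x^{n-k}$, so that $\pval_p(\DD_n) = \max\bigl(0, -\min_{0 \le k \le n-1}\pval_p(\binom nk B_k)\bigr)$. The key preliminary step is squarefreeness. By the von Staudt--Clausen theorem (together with the trivial values $B_0 = 1$, $B_1 = -\tfrac12$, and $B_k = 0$ for odd $k \ge 3$) we have $\pval_p(B_k) \ge -1$ for all $k$, with equality for even $k$ exactly when $(p-1)\mid k$; meanwhile Legendre's/Kummer's formula gives $\pval_p\binom nk \ge 0$. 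Therefore $\pval_p(\binom nk B_k) \ge -1$ throughout, so $\pval_p(\DD_n) \in \{0,1\}$ and $\DD_n$ is squarefree. It then remains to decide when the value $-1$ is attained: a term has valuation $-1$ precisely when $\pval_p(B_k) = -1$ and $\binom nk$ is a $p$-adic unit, the latter meaning by Lucas that the base-$p$ digits of $k$ are dominated by those of $n$ with $k < n$.

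The crux is the combinatorial equivalence: such an admissible $k$ exists if and only if $s_p(n) \ge p$. For the implication "existence $\Rightarrow s_p(n)\ge p$" I would use $k \equiv s_p(k) \pmod{p-1}$, so that $(p-1)\mid k$ with $k\ge 2$ forces $s_p(k)$ to be a positive multiple of $p-1$; digit-domination gives $s_p(k) \le s_p(n)$, and strict inequality $k < n$ rules out $s_p(n) = s_p(k) = p-1$, leaving $s_p(n) \ge p$ (the case $s_p(k)\ge 2(p-1)\ge p$ being immediate). Conversely, if $s_p(n) \ge p$ one selects digits $k_i \le n_i$ summing to exactly $p-1$, yielding an admissible $k$; the prime $p = 2$, where $B_1 = -\tfrac12$ and $(p-1)\mid k$ automatically, is disposed of as an easy separate sub-case. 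Finally the bound $p \le \MM_n$ is automatic, and this is where the value of $\MM_n$ comes from: for odd $p$ one has $n \equiv s_p(n)\pmod 2$, so the least $n$ with $s_p(n)\ge p$ is $2p-1$ when $n$ is odd and $3p-1$ when $n$ is even, giving $p = \MM_n$ in each parity, while $p = 2$ is checked directly. Rewriting $\prod_{s_p(n)\ge p} p$ with this explicit range produces \eqref{eq:Dn-formula}.

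The parity statement \eqref{eq:Dn-odd} is then immediate from the criterion: $\DD_n$ is odd iff $2 \nmid \DD_n$ iff $s_2(n) < 2$, and since $s_2(n) \ge 1$ for $n \ge 1$ this means $s_2(n) = 1$, i.e.\ $n = 2^k$. I expect the main obstacle to be the combinatorial equivalence and the parity-sensitive extremal digit-sum computation that pins down $\MM_n$; by contrast the valuation bookkeeping, the squarefreeness argument, and the passage from the criterion to the two formulas are comparatively routine.
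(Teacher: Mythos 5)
Your proposal is correct, and there is nothing in this paper to compare it against directly: Theorem~\ref{thm:denom-bernpoly} is imported from \cite{Kellner&Sondow:2017} without proof here. Your route --- monicity of $B_{n+1}(x)-B_{n+1}$ to split off the factor $n+1$, von Staudt--Clausen giving $\pval_p(B_k) \geq -1$ (hence squarefreeness), Kummer/Lucas digit domination to detect when $\pval_p \bigl( \binom{n}{k} B_k \bigr) = -1$, and the parity of $s_p(n)$ to pin down $\MM_n$ --- is essentially the argument of that cited paper, and your prime-by-prime criterion $p \mid \DD_n \iff s_p(n) \geq p$ is exactly \eqref{eq:Dn-prod}, quoted in the Remark from \cite{Kellner:2017}.
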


The first few values of $\DD_n$ are (see \cite[Seq. A195441]{OEIS})
$$
  \DD_n = 1, 1, 2, 1, 6, 2, 6, 3, 10, 2, 6, 2, 210, 30, 6, 3, 30, 10, 210, 42, 330, \dotsc.
$$

The sequence $(\DD_n)_{n \geq 1}$ and its properties will play a central role
in this paper. The denominators $\DD_n$ are involved in formulas for related
denominators in an essential way.
As implied by the product formula~\eqref{eq:Dn-formula}, it turns out that
the values of $\DD_n$ obey certain divisibility properties.
This culminates in the fact that certain quotients of successive denominators
$\DD_n$ are infinitely often integers, as we will see.

Here we extend Theorem~\ref{thm:denom-bernpoly} to the denominator of
$\SP_{m,r}^n(x)$, as follows.

\begin{theorem} \label{thm:main}
We have
\begin{equation} \label{eq:sum-ap-denom-formula}
  \denom \bigl( \SP_{m,r}^n(x) \bigr) =
    \frac{n+1}{\gcd(n+1,m^n)} \cdot \frac{\DD_{n+1}}{\gcd(\DD_{n+1},m)}.
\end{equation}
In particular, $\denom \bigl( \SP_{m,r}^n(x) \bigr)$ divides
$\denom \bigl( S_n(x) \bigr)$ and is independent of $r$.
Moreover, for any integers $r_1, r_2 \geq 0$,
$$
  \SP_{m,r_1}^n(x) - \SP_{m,r_2}^n(x) \in \ZZ[x].
$$
\end{theorem}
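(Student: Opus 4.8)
The plan is to reduce everything to the base case $r=0$, where $\SP_{m,0}^n(x)=m^nS_n(x)$ by \eqref{eq:sum-ap-0-Sn}, and then invoke Theorem~\ref{thm:denom-bernpoly}. First I would rewrite \eqref{eq:sum-ap-formula} using $S_n(x)=\frac{1}{n+1}(B_{n+1}(x)-B_{n+1})$ as $\SP_{m,r}^n(x)=m^n\bigl(S_n(x+\tfrac rm)-S_n(\tfrac rm)\bigr)$, so that $\SP_{m,r}^n(x)-\SP_{m,0}^n(x)=m^n\bigl(S_n(x+\tfrac rm)-S_n(\tfrac rm)-S_n(x)\bigr)=:W(x)$. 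Both remaining assertions, that $\denom$ is independent of $r$ and that $\SP_{m,r_1}^n-\SP_{m,r_2}^n\in\ZZ[x]$, follow once I show $W(x)\in\ZZ[x]$ for every $r$: then $\SP_{m,r}^n$ and $\SP_{m,0}^n$ share a denominator, and $\SP_{m,r_1}^n-\SP_{m,r_2}^n=W_{r_1}(x)-W_{r_2}(x)$.

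To prove $W(x)\in\ZZ[x]$ I would expand $(km+r)^n$ binomially and use $\SP_{m,0}^n=m^nS_n$ to get $W(x)=\sum_{s=0}^{n-1}\binom ns r^{n-s}m^sS_s(x)$, then check $p$-integrality of the coefficients prime by prime. The easy ingredient is the shift identity $\SP_{m,r+m}^n(x)-\SP_{m,r}^n(x)=(mx+r)^n-r^n$, whose specialization $m=1$ telescopes to $\SP_{1,t}^n(x)-\SP_{1,0}^n(x)\in\ZZ[x]$ for every integer $t$. For a prime $p\mid m$ I would argue termwise: since $\denom(S_s(x))=(s+1)\DD_{s+1}$ and $\DD_{s+1}$ is squarefree by Theorem~\ref{thm:denom-bernpoly}, each coefficient of $\binom ns r^{n-s}m^sS_s(x)$ has valuation at least $s\,\pval_p(m)-\pval_p(s+1)-\pval_p(\DD_{s+1})\ge s-\pval_p(s+1)-\pval_p(\DD_{s+1})\ge0$, the last being an elementary estimate (using $p\le\MM_{s+1}$ whenever $p\mid\DD_{s+1}$). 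For a prime $p\nmid m$ the factor $m^s$ is useless and genuine cancellation among the $S_s$ must occur; here I would pick an integer $t\equiv r/m\pmod{p^N}$ with $N$ large, compare $W$ with the integral polynomial $m^n(\SP_{1,t}^n(x)-\SP_{1,0}^n(x))$, and note that replacing $r/m$ by $t$ changes each coefficient by something of valuation $\ge N-\pval_p((n+1)\DD_{n+1})>0$, so $W$ is $p$-integral. This case is the main obstacle, being exactly where the individual denominators of the $S_s$ do not clear and one must exploit the arithmetic of the shift $r/m$; intersecting over all $p$ then yields $W\in\ZZ[x]$.

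It remains to evaluate $\denom(\SP_{m,0}^n(x))=\denom(m^nS_n(x))$. Writing $Q(x):=(n+1)\DD_{n+1}S_n(x)$, Theorem~\ref{thm:denom-bernpoly} says $Q\in\ZZ[x]$ is primitive, so $\denom(m^nS_n(x))=\denom\!\bigl(\tfrac{m^n}{(n+1)\DD_{n+1}}Q(x)\bigr)=\frac{(n+1)\DD_{n+1}}{\gcd(m^n,(n+1)\DD_{n+1})}$. Finally I would verify prime by prime that this equals the right-hand side of \eqref{eq:sum-ap-denom-formula}: with $a=\pval_p(n+1)$, $d=\pval_p(\DD_{n+1})\in\{0,1\}$ and $e=\pval_p(m)$ this is the identity $\min(ne,a+d)=\min(a,ne)+\min(d,e)$, whose only potentially false case ($d=1$, $e\ge1$, $a\ge ne$) is vacuous, since $p\mid\DD_{n+1}$ forces $p\le\MM_{n+1}<n+1$, so $a\ge ne\ge n$ would give $p^n\le n+1$, impossible for $n\ge2$ (and $\DD_2=1$ settles $n=1$). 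The divisibility of $\denom(\SP_{m,r}^n(x))$ into $\denom(S_n(x))=(n+1)\DD_{n+1}$ is then immediate from the displayed quotient, which completes the proof.
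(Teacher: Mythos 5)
Your argument is correct, but it takes a genuinely different route from the paper's. The paper expands $(n+1)\SP_{m,r}^n(x)$ via the Appell addition formula into $m^n(B_{n+1}(x)-B_{n+1})$ plus a sum whose coefficients are $c_{n,k}\,m^{n-k}\,\BP_{m,r}^k$ with $c_{n,k}=\frac1k\binom{n}{k-1}$, and proves integrality of that sum by combining the Almkvist--Meurman theorem ($\BP_{m,r}^k\in\ZZ$) with a new divisibility lemma ($p^{\pval_p(k)}\mid\BP_{m,r}^k$ for $p\nmid m$, proved via von Staudt--Clausen and \eqref{eq:Bn-n-pval}) and the symmetry $c_{n,k}=c_{n,n+1-k}$. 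You instead subtract the base case $\SP_{m,0}^n=m^nS_n$ and prove $W\in\ZZ[x]$ by splitting over primes: for $p\mid m$ a termwise valuation count that leans on Theorem~\ref{thm:denom-bernpoly} for \emph{every} $S_s$, $s<n$ (the estimate $s\ge\pval_p(s+1)+\pval_p(\DD_{s+1})$ does check out, with $s\le 2$ handled by hand); for $p\nmid m$ a $p$-adic density argument approximating $r/m$ by integers $t$ and falling back on the trivially integral integer-shift case. This bypasses Almkvist--Meurman and the paper's Lemma~\ref{lem:BP-mod-power} entirely --- in effect your continuity argument re-proves the needed instance of their integrality --- at the cost of invoking the full strength of Theorem~\ref{thm:denom-bernpoly} at all lower degrees. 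One small imprecision: in the $p\nmid m$ step the perturbation of the coefficients under $r/m\mapsto t$ is bounded below by $N$ minus the valuation of a common denominator of the two-variable polynomial $\frac{1}{n+1}\bigl(B_{n+1}(x+c)-B_{n+1}(c)\bigr)$, which involves the $\DB_j$ for $j\le n$ rather than just $(n+1)\DD_{n+1}$; since $N$ is free this is harmless. Your final gcd bookkeeping (the identity $\min(ne,a+d)=\min(a,ne)+\min(d,e)$ with the exceptional case ruled out by $p^n\le n+1$ being impossible) is equivalent to the paper's device of replacing $\gcd(n+1,m^n)$ by $\gcd(n+1,m^{n-1})$ and using squarefreeness of $\DD_{n+1}$.
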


The next theorem shows exactly when $\SP_{m,r}^n(x)$ itself lies in $\ZZ[x]$.

\begin{theorem} \label{thm:main2}
For $n \geq 1$, denote
$$
  \DB_n := \denom \bigl( B_n(x) \bigr), \quad \D_n := \denom(B_n).
$$
Then we have the equivalence
$$
  \SP_{m,r}^n(x) \in \ZZ[x] \quad \iff \quad \DB_n \mid m
$$
as well as the equalities
\begin{equation} \label{eq:DB-formula}
  \DB_n = \lcm( \DD_n, \D_n )
\end{equation}
and
\begin{equation} \label{eq:DB-formula2}
  \DB_n = \lcm \bigl( \DD_{n+1}, \rad(n+1) \bigr),
\end{equation}
where $\rad(k) := \prod_{p \, \mid \, k} \, p$.
\end{theorem}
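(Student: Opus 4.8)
The plan is to prove the three assertions in the order \eqref{eq:DB-formula}, \eqref{eq:DB-formula2}, and finally the integrality criterion, the last of which drops out of Theorem~\ref{thm:main} once \eqref{eq:DB-formula2} is in hand. The equality \eqref{eq:DB-formula} is the soft structural fact, \eqref{eq:DB-formula2} carries all the arithmetic content, and the equivalence is then essentially bookkeeping.

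For \eqref{eq:DB-formula} I would argue purely from the coefficient structure. Writing $B_n(x) = \bigl( B_n(x) - B_n \bigr) + B_n$ splits $B_n(x)$ into a polynomial with vanishing constant term and the constant $B_n$. Since the denominator of a polynomial in $\QQ[x]$ is the least common multiple of the denominators of its coefficients, and since the non-constant coefficients of $B_n(x)$ are exactly those of $B_n(x) - B_n$ while the constant coefficient is $B_n$, one gets $\DB_n = \lcm \bigl( \DD_n, D_n \bigr)$ at once.

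The heart of the matter is \eqref{eq:DB-formula2}, which after \eqref{eq:DB-formula} amounts to $\lcm(\DD_n, D_n) = \lcm(\DD_{n+1}, \rad(n+1))$. Here I would work one prime at a time. All four quantities are squarefree --- $\DD_n$ and $\DD_{n+1}$ by the product formula \eqref{eq:Dn-formula}, $D_n$ by the von Staudt--Clausen theorem, and $\rad(n+1)$ by definition --- so it suffices to prove, for each prime $p$, the equivalence
\[
  \bigl(\, p \mid \DD_n \ \text{or}\ p \mid D_n \,\bigr)
  \iff
  \bigl(\, p \mid \DD_{n+1} \ \text{or}\ p \mid (n+1) \,\bigr).
\]
The relevant tools are: the digit-sum criterion $p \mid \DD_k \iff s_p(k) \geq p$ extracted from \eqref{eq:Dn-formula} (one checks that $s_p(k) \geq p$ already forces $p \leq \MM_k$, so the bound in \eqref{eq:Dn-formula} is automatic); von Staudt--Clausen in the form $p \mid D_n \iff (p-1) \mid n$; the effect of adding $1$ on the base-$p$ digits, namely $s_p(n+1) = s_p(n)+1$ when no carry occurs and a controlled increase under a carry chain; and the congruence $N \equiv s_p(N) \pmod{p-1}$. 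I expect the carry analysis to be the main obstacle. The delicate configurations are the two boundary cases in which the digit-sum test just barely fails: when passing from $n$ to $n+1$ triggers a carry chain (so $s_p$ jumps by more than $1$), and when $s_p(n+1)$ equals $p$ exactly, so that $s_p(n) = p-1$ sits one short of the threshold. In precisely these cases the von Staudt--Clausen factor must compensate, and it does: if $(p-1) \mid n$ then $s_p(n+1) \equiv 1 \pmod{p-1}$ pins the digit sum either at $1$ (forcing $n+1$ to be a power of $p$, hence $p \mid (n+1)$) or at $\geq p$; conversely $s_p(n+1) = p$ forces $n+1 \equiv 1 \pmod{p-1}$ and thus $(p-1) \mid n$, so $p \mid D_n$. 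Matching these up, with separate attention to $p = 2$ and to the value $n = 1$, is where the real care is required.

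Finally, the integrality criterion follows by combining \eqref{eq:DB-formula2} with Theorem~\ref{thm:main}. Indeed $\SP_{m,r}^n(x) \in \ZZ[x]$ means $\denom \bigl( \SP_{m,r}^n(x) \bigr) = 1$, and by \eqref{eq:sum-ap-denom-formula} the two factors on the right, each a positive integer, must both equal $1$, i.e.\ $(n+1) \mid m^n$ and $\DD_{n+1} \mid m$. A short valuation argument gives $(n+1) \mid m^n \iff \rad(n+1) \mid m$, so the condition becomes $\lcm \bigl( \rad(n+1), \DD_{n+1} \bigr) \mid m$, which by \eqref{eq:DB-formula2} is exactly $\DB_n \mid m$.
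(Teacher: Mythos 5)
Your overall architecture matches the paper's: the integrality criterion is derived from Theorem~\ref{thm:main} exactly as in the paper (the denominator is $1$ iff $n+1\mid m^n$ and $\DD_{n+1}\mid m$, and $n+1\mid m^n\iff\rad(n+1)\mid m$), and \eqref{eq:DB-formula2} is reduced to a prime-by-prime comparison of $\lcm(\DD_n,D_n)$ with $\lcm\bigl(\DD_{n+1},\rad(n+1)\bigr)$ using the digit-sum criterion, von Staudt--Clausen, the congruence $s_p(N)\equiv N\pmod{p-1}$, and the behaviour of $s_p$ under $n\mapsto n+1$ --- precisely the content of the paper's Lemmas~\ref{lem:divisor-1} and~\ref{lem:divisor-2}. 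Your direct coefficient argument for \eqref{eq:DB-formula} is correct and more self-contained than the paper, which simply cites the proof of its earlier Theorem~4.

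There is, however, a genuine gap in the ``$\Leftarrow$'' half of your prime-by-prime equivalence: you never treat the case where the right-hand side holds because $p\mid n+1$. There you must still produce $p\mid\DD_n$ or $p\mid D_n$, and neither of your two ``delicate configurations'' covers it; moreover your parenthetical that a carry makes $s_p$ ``jump by more than $1$'' has the sign backwards (by $s_p(n+1)=s_p(n)+1-(p-1)\,\pval_p(n+1)$ the digit sum \emph{drops} under a carry), which suggests this case was not actually worked through. The paper disposes of it by invoking the nontrivial cited result \eqref{eq:Dn-rad} (namely $\rad(n+1)\mid\DD_n$ for composite $n+1$) together with von Staudt--Clausen when $n+1=p$ is prime. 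An elementary patch exists: writing $n+1=p^a m$ with $p\nmid m$, one has $n=p^a(m-1)+(p^a-1)$, so $s_p(n)=s_p(m-1)+a(p-1)\geq p$ unless $a=1$ and $m=1$, i.e.\ unless $n+1=p$, in which case $n=p-1$ is even (or $n=1$, $p=2$) and $p\mid D_n$. Without some such argument \eqref{eq:DB-formula2} is unproved. A smaller point: when you deduce $p\mid D_n$ from $(p-1)\mid n$ you also need $n$ even, since $D_n=1$ for odd $n\geq 3$; this is rescued because $(p-1)\mid n$ with $n$ odd forces $p=2$, contradicting $p\nmid n+1$, but it should be said.
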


The first few values of $\DB_n$ and $\D_n$ are
(see \cite[Seqs. A144845 and A027642]{OEIS})
\begin{align*}
  \DB_n &= 2, 6, 2, 30, 6, 42, 6, 30, 10, 66, 6, 2730, 210, 30, 6, 510, 30, 3990, \dotsc, \\
  \D_n &= 2, 6, 1, 30, 1, 42, 1, 30, 1, 66, 1, 2730, 1, 6, 1, 510, 1, 798, 1, 330, \dotsc.
\end{align*}

\begin{example}
Set $m = \DB_n = 2, 6, 2, 30, 6$ for $n=1,2,3,4,5$,
respectively. Then certainly $\DB_n \mid m$, so $\ZZ[x]$ contains
the polynomials $\SP_{m,r}^n(x)$ with $r=0$ (which satisfy~\eqref{eq:sum-ap-0-Sn}):
\begin{align*}
  \SP_{2,0}^1(x)  &= x^2-x = 2\cdot\frac12 \, (x^2-x) = 2\cdot S_1(x),\\
  \SP_{6,0}^2(x)  &= 6 \, (2 x^3-3 x^2+x) = 6^2 \cdot \frac16 \, (2 x^3-3 x^2+x) = 6^2 \cdot S_2(x),\\
  \SP_{2,0}^3(x)  &= 2 \, (x^4-2 x^3+x^2) = 2^3 \cdot \frac14 \, (x^4-2 x^3+x^2) = 2^3 \cdot S_3(x),\\
\intertext{}% to allow a display break here
  \SP_{30,0}^4(x) &= 27000 \, (6 x^5-15 x^4+10 x^3-x) \\
  &= 30^4 \cdot \frac{1}{30} \, (6 x^5-15 x^4+10 x^3-x) = 30^4 \cdot S_4(x),\\
  \SP_{6,0}^5(x)  &= 648 \, (2 x^6-6 x^5+5 x^4-x^2) \\
  &= 6^5 \cdot \frac{1}{12}  \, (2 x^6-6 x^5+5 x^4-x^2) = 6^5 \cdot S_5(x)
\intertext{as well as those with $r=1$:}
  \SP_{2,1}^1(x)  &= x^2,\\
  \SP_{6,1}^2(x)  &= 12 x^3-12 x^2+x,\\
  \SP_{2,1}^3(x)  &= 2 x^4-x^2,\\
  \SP_{30,1}^4(x) &= 162000 x^5-378000 x^4+217800 x^3+24360 x^2-26159 x,\\
  \SP_{6,1}^5(x)  &= 1296 x^6-2592 x^5+540 x^4+1200 x^3-273 x^2-170 x.
\end{align*}
\end{example}

\begin{remark}
Bazs\'o and Mez\H{o} \cite[Eqs.~(7), (8) and Thm.~2, pp.~121--122]{Bazso&Mezo:2015}
defined a very complicated formula $F(n)$ in order to give a somewhat
tautological characterization of when $\SP_{m,r}^n(x) \in \ZZ[x]$.
With their formula they computed a few values of $F(n)$
that apparently equal $\DB_n$, but without recognizing this relation.
They were not aware of advanced results like those in our
Theorems~\ref{thm:main} and~\ref{thm:main2}.
\end{remark}

As an immediate by-``product'' of our theorems, we obtain a new product formula
for $\DB_n$ from \eqref{eq:DB-formula2} by applying Theorem~\ref{thm:denom-bernpoly}.
(Other explicit product formulas for this denominator, based on \eqref{eq:DB-formula},
were already given in \cite[Thm.~4]{Kellner&Sondow:2017}.)

\begin{corollary} \label{cor:Dn-prod}
For $n \geq 1$, the denominator of the $n$th Bernoulli polynomial equals
$$
  \DB_n =
  	\prod_{p \, \mid \, n+1} p
    \quad \times \quad \hspace{-0.6em}
    \prod_{
      \substack{
        p \, \nmid \, n+1 \\[0.2em]
        p \, \leq \, \MM_{n+1} \\[0.2em]
        s_p(n+1) \, \geq \, p}
    } \hspace{-0.2em} p.
$$
\end{corollary}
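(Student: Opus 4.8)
The plan is to combine the two facts already in hand: the radical--lcm identity \eqref{eq:DB-formula2}, which asserts $\DB_n = \lcm(\DD_{n+1}, \rad(n+1))$, together with the explicit product formula \eqref{eq:Dn-formula} for $\DD_{n+1}$ from Theorem~\ref{thm:denom-bernpoly}. The key structural observation is that both $\DD_{n+1}$ and $\rad(n+1)$ are \emph{squarefree}: each is a product of distinct primes, every prime occurring to the first power. Consequently their least common multiple is simply the product of all primes dividing at least one of the two factors, that is, the product over the union of the underlying prime sets.

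First I would record the two prime sets explicitly. By definition $\rad(n+1)$ is supported on $A := \{p : p \mid n+1\}$, while by \eqref{eq:Dn-formula} the factor $\DD_{n+1}$ is supported on $B := \{p : p \leq \MM_{n+1},\ s_p(n+1) \geq p\}$. Since both factors are squarefree, the observation above yields
$$
  \DB_n = \lcm(\DD_{n+1}, \rad(n+1)) = \prod_{p \, \in \, A \, \cup \, B} p.
$$

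The remaining step is purely set-theoretic. I would rewrite the union as the disjoint union $A \cup B = A \sqcup (B \setminus A)$, where
$$
  B \setminus A = \{p : p \nmid n+1,\ p \leq \MM_{n+1},\ s_p(n+1) \geq p\}.
$$
Because $A$ and $B \setminus A$ are disjoint, the product over $A \cup B$ splits as the product over $A$ times the product over $B \setminus A$, which is exactly the two-factor expression in the statement. I do not anticipate any genuine obstacle: the one point deserving care is the squarefreeness remark that converts the lcm into a product over a union of primes, after which the conclusion is immediate. One should also confirm that the cutoff $\MM_{n+1}$ displayed in the corollary matches the index $n+1$ inherited from $\DD_{n+1}$ in \eqref{eq:DB-formula2}, which indeed it does.
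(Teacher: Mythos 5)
Your proposal is correct and follows exactly the route the paper intends: the corollary is presented there as an immediate consequence of \eqref{eq:DB-formula2} combined with the product formula \eqref{eq:Dn-formula}, using squarefreeness to turn the lcm into a product over the union of the two prime sets. Your explicit splitting of that union into $A \sqcup (B\setminus A)$ just spells out the step the paper leaves implicit.
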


\begin{remark}
The first author \cite{Kellner:2017} has shown that the condition $s_p(n) \geq p$
is sufficient in \eqref{eq:Dn-formula} to define $\DD_n$ as a product over all primes:
\begin{equation} \label{eq:Dn-prod}
  \DD_n = \prod_{s_p(n) \, \geq \, p} p.
\end{equation}
(So one can remove the restrictions $p \leq \MM_n$ in \eqref{eq:Dn-formula} and
$p \leq \MM_{n+1}$ in Corollary~\ref{cor:Dn-prod}.)
Moreover, if $n+1$ is composite, then (see \cite[Thm.~1]{Kellner:2017})
\begin{equation} \label{eq:Dn-rad}
  \rad(n+1) \mid \DD_n.
\end{equation}
\end{remark}

Finally, we obtain new properties of $\DD_n$ and $\DB_n$.
\begin{corollary} \label{cor:Dn-relation}
The sequences $(\DD_n)_{n \geq 1}$ and $(\DB_n)_{n \geq 1}$ satisfy the following
conditions:
\begin{enumerate}
\itemsep5pt

\item We have the relations
  \begin{alignat*}{3}
    \DD_n &= \lcm \bigl( \DD_{n+1}, \rad(n+1) \bigr), & \quad & \text{if $n \geq 3$ is odd,} \\
    \DB_n &= \lcm \bigl( \DB_{n+1}, \rad(n+1) \bigr), && \text{if $n \geq 2$ is even.}
  \end{alignat*}

\item We have the divisibilities
  \begin{alignat*}{3}
    \DD_{n+1} &\mid \DD_n, & \quad & \text{if $n \geq 1$ is odd,} \\
    \DB_{n+1} &\mid \DB_n, && \text{if $n \geq 2$ is even.}
  \end{alignat*}

\end{enumerate}
\end{corollary}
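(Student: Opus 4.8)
The plan is to deduce everything from the two formulas \eqref{eq:DB-formula} and \eqref{eq:DB-formula2} for $\DB_n$, using the classical fact that $B_n = 0$ for odd $n \geq 3$. The crucial first step is the identity $\DD_n = \DB_n$ for odd $n \geq 3$: since $B_n = 0$ in this range, we have $D_n = \denom(B_n) = 1$, and so \eqref{eq:DB-formula} collapses to $\DB_n = \lcm(\DD_n, D_n) = \DD_n$. Comparing this with \eqref{eq:DB-formula2} then gives the first relation of part~(i),
$$
  \DD_n = \DB_n = \lcm \bigl( \DD_{n+1}, \rad(n+1) \bigr) \qquad (n \geq 3 \text{ odd}).
$$

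For the second relation I would fix an even $n \geq 2$, so that $n+1$ is odd and at least $3$. Applying the identity just established, but with $n+1$ in place of $n$, gives $\DD_{n+1} = \DB_{n+1}$, and substituting this into \eqref{eq:DB-formula2} yields
$$
  \DB_n = \lcm \bigl( \DD_{n+1}, \rad(n+1) \bigr) = \lcm \bigl( \DB_{n+1}, \rad(n+1) \bigr),
$$
which is precisely the second relation of part~(i).

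Part~(ii) then follows at once from part~(i). For odd $n \geq 3$, the first relation exhibits $\DD_n$ as a least common multiple having $\DD_{n+1}$ among its arguments, whence $\DD_{n+1} \mid \DD_n$; the single remaining case $n = 1$ is settled by the direct values $\DD_1 = \DD_2 = 1$. Symmetrically, for even $n \geq 2$ the second relation shows $\DB_{n+1} \mid \DB_n$.

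Once the inputs \eqref{eq:DB-formula}, \eqref{eq:DB-formula2}, and the vanishing of the odd-index Bernoulli numbers are in hand, the argument is purely formal, so I do not anticipate a genuine obstacle. The only point demanding attention is that $B_n = 0$ may be invoked solely for odd $n \geq 3$ --- recall $B_1 = -\tfrac12 \neq 0$ --- which is exactly the reason the two relations, and hence the two divisibilities, carry the parity-matched lower bounds $n \geq 3$ in the odd case and $n \geq 2$ in the even case.
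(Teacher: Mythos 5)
Your proposal is correct and follows essentially the same route as the paper: both deduce $D_n = 1$ for odd $n \geq 3$ from the vanishing of the odd-index Bernoulli numbers, combine \eqref{eq:DB-formula} and \eqref{eq:DB-formula2} to get the two relations in part~(i), and handle the leftover case $n=1$ of part~(ii) via $\DD_1 = \DD_2 = 1$. No issues.
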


\begin{theorem} \label{thm:Dn-ratio1}
For odd $n \geq 1$, the quotients $($see \cite[Seq. A286516]{OEIS}$)$
$$
  \frac{ \DD_{n}}{ \DD_{n+1}} = 1, 2, 3, 2, 5, 3, 7, 2, 3, 5, 11, 1, 13, 7, 15, 2, 17, 3, 19, 5, 7, \dotsc
$$
are odd, except that
$$
  \frac{ \DD_{n}}{ \DD_{n+1}} = 2 \quad
  \iff \quad n = 2^k - 1 \;\; (k \geq 2).
$$
Moreover, if $p$ is an odd prime and $n = 2^\ell p^k - 1$, then
\begin{alignat*}{3}
  \frac{ \DD_{n}}{ \DD_{n+1}} &\in \{ 1, p \} \quad
  && (k, \ell \geq 1),
\intertext{and more precisely,}
  \frac{ \DD_{n}}{ \DD_{n+1}} &= p
  && (k \geq 1, \, 1 \leq \ell < \log_2 p),
\intertext{while}
  \frac{ \DD_{n}}{ \DD_{n+1}} &= 1
  && (k \geq 1, \, \ell \geq \LB_p),
\end{alignat*}
where $\LB_p > \log_2 p$ is a constant depending on $p$.
\end{theorem}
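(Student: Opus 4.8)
The plan is to reduce the quotient to a product over primes and then control each prime through the way base-$q$ digit sums change under the map $n \mapsto n+1$. The key elementary fact is a carry identity: the number of trailing digits equal to $q-1$ in the base-$q$ expansion of $n$ is exactly $\pval_q(n+1)$, so adding $1$ clears those digits and increments the next one, giving
$$
  s_q(n+1) = s_q(n) + 1 - (q-1)\,\pval_q(n+1).
$$
Since $n$ is odd, Corollary~\ref{cor:Dn-relation}(ii) gives $\DD_{n+1} \mid \DD_n$, and because both denominators are squarefree, the unrestricted product formula \eqref{eq:Dn-prod} yields
$$
  \frac{\DD_n}{\DD_{n+1}} = \prod_{\substack{ s_q(n) \geq q \\ s_q(n+1) < q }} q .
$$
Substituting the carry identity into the two inequalities shows that a prime $q$ contributes precisely when $q \mid n+1$ and $q \leq s_q(n) < (q-1)\bigl(\pval_q(n+1)+1\bigr)$.

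Next I would treat $q = 2$. The number of trailing $1$-digits of $n$, namely $\pval_2(n+1)$, never exceeds the total digit count $s_2(n)$, with equality iff $n = 2^k-1$. The contribution condition forces $s_2(n) < \pval_2(n+1)+1$, hence $s_2(n) = \pval_2(n+1)$, i.e.\ $n = 2^k-1$, while the lower bound $s_2(n) \geq 2$ forces $k \geq 2$. Therefore $2$ divides the quotient iff $n = 2^k-1$ with $k \geq 2$, so otherwise the quotient is odd. When $n = 2^k-1$ the number $n+1 = 2^k$ has no odd prime factor, so no odd prime can contribute and the quotient equals exactly $2$; this proves the stated equivalence.

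Finally, for $n = 2^\ell p^k - 1$ I would note $n+1 = 2^\ell p^k$, so $q = 2$ cannot contribute (as $n \neq 2^j-1$) and only $q = p$ remains, whence the quotient lies in $\{1,p\}$. The base-$p$ expansion of $2^\ell p^k - 1$ consists of the digits of $2^\ell - 1$ followed by $k$ copies of $p-1$, so $s_p(n) = s_p(2^\ell-1) + k(p-1)$; since $\pval_p(n+1) = k$ and the lower bound $s_p(n)\geq p$ is automatic, the contribution condition collapses to $s_p(2^\ell-1) < p-1$, which by the carry identity (with $\pval_p(2^\ell)=0$) is $s_p(2^\ell) \leq p-1$. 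For $\ell < \log_2 p$ the number $2^\ell < p$ is a single base-$p$ digit, so $s_p(2^\ell) = 2^\ell \leq p-1$ and the quotient is $p$. For the threshold claim I would use that $s_p(2^\ell) \to \infty$, so the set $\{\ell : s_p(2^\ell) \leq p-1\}$ is finite; taking $\LB_p$ one more than its maximum makes the quotient equal $1$ for all $\ell \geq \LB_p$, and since every $\ell < \log_2 p$ lies in this set, $\LB_p > \log_2 p$.

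The main obstacle is exactly this last input: the growth $s_p(2^\ell) \to \infty$ is not elementary, being equivalent to the finiteness of the base-$p$ representations of $2^\ell$ having boundedly many nonzero digits. This follows from Baker's theory of linear forms in logarithms (equivalently, from Stewart's lower bounds for the digit sums of $2^\ell$); all remaining steps are routine digit bookkeeping.
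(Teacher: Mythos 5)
Your proof is correct and follows essentially the same route as the paper: both arguments rest on the product formula \eqref{eq:Dn-prod}, the digit-sum carry identity $s_q(n+1)=s_q(n)+1-(q-1)\pval_q(n+1)$, and the Senge--Straus/Stewart input packaged in Lemma~\ref{lem:sp-power}. The only organizational difference is that you fold everything into one explicit criterion for when a prime divides the quotient and compute $s_p(2^\ell p^k-1)$ by hand, where the paper instead invokes \eqref{eq:Dn-odd} and Corollary~\ref{cor:Dn-relation}(i) and works with the simpler quantity $s_p(n+1)=s_p(2^\ell)$.
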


\begin{theorem} \label{thm:Dn-ratio2}
For even $n \geq 2$, all terms are odd in the sequence\newline
$($see \cite[Seq.~A286517]{OEIS}$)$
$$
  \frac{ \DB_{n}}{ \DB_{n+1}} = 3, 5, 7, 3, 11, 13, 5, 17, 19, 7, 23, 5, 3, 29, 31, 11, 35, 37, \dotsc.
$$
In particular, if $p$ is an odd prime and $n = p^k - 1$, then
$$
  \frac{ \DB_{n}}{ \DB_{n+1}} = p \quad (k \geq 1).
$$
More generally, if $p \neq q$ are odd primes and $n = p^k q^\ell - 1$, then
$$
  \frac{ \DB_{n}}{ \DB_{n+1}} \in \{ 1, p, q, pq \}
  \quad (k, \ell \geq 1)
$$
with the following cases:
\begin{alignat*}{3}
  \frac{ \DB_{n}}{ \DB_{n+1}} &= p
  \quad && (k \geq \LB'_{p,q}, \, 1 \leq \ell < \log_q p), \\
  \frac{ \DB_{n}}{ \DB_{n+1}} &= q
  \quad && (1 \leq k < \log_p q, \, \ell \geq \LB''_{p,q}), \\
  \frac{ \DB_{n}}{ \DB_{n+1}} &= 1
  \quad && (k \geq \LB'_{p,q}, \, \ell \geq \LB''_{p,q}),
\end{alignat*}
where $\LB'_{p,q} > \log_p q$ and $\LB''_{p,q} > \log_q p$ are constants
depending on~$p$ and~$q$.
\end{theorem}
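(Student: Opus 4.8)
The plan is to collapse the quotient into a product over primes and then detect each prime through a single base-digit inequality. Throughout set $N := n+1$, which is odd because $n$ is even. First I would record that all denominators in sight are squarefree: by \eqref{eq:Dn-prod} each $\DD_m$ is a product of distinct primes, $\rad$ is squarefree by definition, and hence each $\DB_m = \lcm(\DD_{m+1}, \rad(m+1))$ from \eqref{eq:DB-formula2} is squarefree too. For even $n$, Corollary~\ref{cor:Dn-relation}(i) gives $\DB_n = \lcm(\DB_{n+1}, \rad(N))$, so in particular $\DB_{n+1} \mid \DB_n$ and the quotient is a squarefree integer. Because both factors are squarefree, a prime $\pi$ divides $\DB_n/\DB_{n+1}$ exactly when $\pi \mid \DB_n$ but $\pi \nmid \DB_{n+1}$; since $\pi \mid \DB_n$ means $\pi \mid \DB_{n+1}$ or $\pi \mid N$, this pair of conditions simplifies to $\pi \mid N$ together with $\pi \nmid \DB_{n+1}$. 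Expanding $\DB_{n+1} = \lcm(\DD_{N+1}, \rad(N+1))$ and observing that $\pi \mid N$ forces $\pi \nmid N+1$, the condition $\pi \nmid \DB_{n+1}$ reduces to $s_\pi(N+1) < \pi$. This produces the clean formula
\[
  \frac{\DB_n}{\DB_{n+1}} = \prod_{\substack{\pi \mid N \\ s_\pi(N+1) < \pi}} \pi ,
\]
from which the rest of the theorem can be read off.

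Since $N$ is odd, $2$ never appears in this product, so the quotient is odd. For $N = p^k$ the sole candidate prime is $p$, and $s_p(p^k+1) = 2 < p$, whence the quotient equals $p$. For $N = p^k q^\ell$ the candidate primes are exactly $p$ and $q$, so the quotient is a product over a subset of $\{p,q\}$ and therefore lies in $\{1,p,q,pq\}$. To decide membership I would compute the two digit sums. Writing $N+1 = q^\ell\, p^k + 1$, multiplication by $p^k$ only shifts the base-$p$ digits of $q^\ell$ upward, and the added $1$ occupies an emptied units digit, so no carries occur and $s_p(N+1) = s_p(q^\ell) + 1$. Thus $p$ divides the quotient iff $s_p(q^\ell) \le p-2$; by the symmetric computation in base $q$, the prime $q$ divides it iff $s_q(p^k) \le q-2$.

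It remains to control these two digit sums in the stated exponent ranges. In the small-exponent regime, if $\ell < \log_q p$ then $q^\ell < p$ is a single base-$p$ digit, so $s_p(q^\ell) = q^\ell$; since $q^\ell$ is odd, $q^\ell+1$ is even and cannot equal the odd prime $p$, giving $q^\ell \le p-2$ and hence $p \mid \DB_n/\DB_{n+1}$. Symmetrically, $k < \log_p q$ forces $q \mid \DB_n/\DB_{n+1}$. In the large-exponent regime I would invoke the growth of the base-digit sum of a power: for distinct primes $p \neq q$ one has $s_p(q^\ell) \to \infty$ and $s_q(p^k) \to \infty$, so there exist thresholds $\LB''_{p,q}$ and $\LB'_{p,q}$ beyond which $s_p(q^\ell) \ge p-1$ and $s_q(p^k) \ge q-1$, making $p$, respectively $q$, drop out of the product. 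The small-exponent computation shows these digit sums remain below $p-1$ and $q-1$ for exponents up to $\log_q p$ and $\log_p q$, so the thresholds satisfy $\LB''_{p,q} > \log_q p$ and $\LB'_{p,q} > \log_p q$. Pairing a small exponent on one side with a large exponent on the other yields the cases $\DB_n/\DB_{n+1} = p$ and $= q$, while two large exponents give $= 1$; the remaining (small, small) combination accounts for the value $pq$.

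The hard part is the input $s_p(q^\ell) \to \infty$, equivalently $s_q(p^k) \to \infty$: the assertion that the base-$q$ digit sum of the power $p^k$ grows without bound is not elementary, and I would supply it by citing the Baker-method lower bounds of Stewart on the number of nonzero base-$q$ digits of $p^k$ (the same mechanism underlying the constant $\LB_p$ in Theorem~\ref{thm:Dn-ratio1}). Everything else—the squarefreeness, the product formula, the carry-free digit computation, and the parity trick ruling out the boundary value $q^\ell = p-1$—is routine once that product formula is established.
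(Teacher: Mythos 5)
Your proof is correct and follows essentially the same route as the paper's: squarefreeness together with Corollary~\ref{cor:Dn-relation}(i) reduces the quotient to a prime-by-prime membership test via the digit-sum criterion \eqref{eq:Dn-prod}, and Lemma~\ref{lem:sp-power} (Senge--Straus/Stewart) supplies the thresholds $\LB'_{p,q}$, $\LB''_{p,q}$. The only cosmetic difference is that you test $\pi \nmid \DB_{n+1}$ through $\DB_{n+1}=\lcm\bigl(\DD_{n+2},\rad(n+2)\bigr)$, so your digit sums are evaluated at $n+2$ rather than at $n+1=p^kq^\ell$ as in the paper (which uses $\DB_{n+1}=\DD_{n+1}$); this costs you the extra carry-free computation $s_p(p^kq^\ell+1)=s_p(q^\ell)+1$ and the parity argument ruling out $q^\ell=p-1$, both of which you handle correctly.
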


Theorems~\ref{thm:Dn-ratio1} and~\ref{thm:Dn-ratio2} immediately imply
the following result.

\begin{corollary}
Statements (i), (ii) (respectively, (iii), (iv)) below hold for infinitely
many odd (respectively, even) values of $n$:
\begin{enumerate}
\item $\DD_n / \DD_{n+1} = p$ for a given prime $p \geq 2$.
\item $\DD_n = \DD_{n+1}$.
\item $\DB_n / \DB_{n+1} = p$ for a given prime $p \geq 3$.
\item $\DB_n = \DB_{n+1}$.
\end{enumerate}
\end{corollary}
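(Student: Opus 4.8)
The plan is to prove all four statements simultaneously by exhibiting, for each, an explicit infinite family of $n$ on which the stated identity holds, reading off these families directly from Theorems~\ref{thm:Dn-ratio1} and~\ref{thm:Dn-ratio2}. The only points demanding attention are that the parity of the chosen $n$ match the hypothesis of the relevant theorem (odd $n$ for Theorem~\ref{thm:Dn-ratio1}, even $n$ for Theorem~\ref{thm:Dn-ratio2}), and that the prescribed exponent windows be nonempty.

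For statements~(i) and~(ii) I would invoke Theorem~\ref{thm:Dn-ratio1}. For~(i) with $p=2$, that theorem gives $\DD_n/\DD_{n+1}=2 \iff n=2^k-1$ with $k\geq 2$; these $n$ are odd and there are infinitely many, so the case $p=2$ is immediate. For~(i) with $p$ an odd prime I would take $n=2^\ell p^k-1$, which is odd because $2^\ell p^k$ is even; the theorem yields $\DD_n/\DD_{n+1}=p$ whenever $k\geq 1$ and $1\leq\ell<\log_2 p$. Since $p\geq 3$ forces $\log_2 p>1$, the choice $\ell=1$ is admissible, so the family $n=2p^k-1$ $(k\geq 1)$ supplies infinitely many examples. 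For~(ii), fix any odd prime $p$; the same theorem gives $\DD_n/\DD_{n+1}=1$ once $k\geq 1$ and $\ell\geq\LB_p$, so fixing an integer $\ell\geq\LB_p$ and letting $k\geq 1$ vary produces infinitely many odd $n=2^\ell p^k-1$ with $\DD_n=\DD_{n+1}$.

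For statements~(iii) and~(iv) I would invoke Theorem~\ref{thm:Dn-ratio2}, whose families consist of even $n$ (a product of odd prime powers minus one is even). For~(iii) with $p\geq 3$, the theorem gives $\DB_n/\DB_{n+1}=p$ for $n=p^k-1$ with $k\geq 1$, yielding infinitely many even $n$ at once. For~(iv) I would fix two distinct odd primes $p\neq q$ and set $n=p^kq^\ell-1$; the theorem gives $\DB_n/\DB_{n+1}=1$ as soon as $k\geq\LB'_{p,q}$ and $\ell\geq\LB''_{p,q}$, so fixing an integer $\ell\geq\LB''_{p,q}$ and letting $k\to\infty$ gives infinitely many even $n$ with $\DB_n=\DB_{n+1}$.

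I expect no genuine obstacle here, as the corollary is a direct harvesting of the two ratio theorems; the only things to check carefully are the nonemptiness of the exponent windows — in particular that $1\leq\ell<\log_2 p$ admits the integer $\ell=1$ for every odd prime $p$, which is precisely where the hypothesis $p\geq 3$ enters in~(i) — and that fixing one exponent while varying the other still yields infinitely many distinct values of $n$, which holds because $p^k$ and $2^\ell p^k$ are strictly increasing in $k$.
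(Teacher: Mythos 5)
Your proposal is correct and is precisely the argument the paper intends: the corollary is stated as an immediate consequence of Theorems~\ref{thm:Dn-ratio1} and~\ref{thm:Dn-ratio2} with no further proof given, and your explicit families ($n=2^k-1$ for $p=2$, $n=2p^k-1$ for odd $p$, $n=2^\ell p^k-1$ with $\ell\geq\LB_p$ fixed, $n=p^k-1$, and $n=p^kq^\ell-1$ with $\ell\geq\LB''_{p,q}$ fixed) are exactly the right harvest, including the check that $\ell=1$ lies in the window $1\leq\ell<\log_2 p$ for $p\geq 3$.
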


%%%%%%%%%%%%%%%%%%%%%%%%%%%%%%%%%%%%%%%%%%%%%%%%%%%%%%%%%%%%%%%%%%%%%%%%%%%%%%%%
% SEC: PRELIMINARIES
%%%%%%%%%%%%%%%%%%%%%%%%%%%%%%%%%%%%%%%%%%%%%%%%%%%%%%%%%%%%%%%%%%%%%%%%%%%%%%%%

\section{Preliminaries}

Let $\ZZ_p$ be the ring of $p$-adic integers, $\QQ_p$ be the field of $p$-adic
numbers, and $\pval_p(s)$ be the $p$-adic valuation of $s \in \QQ_p$
(see \cite[Chap.~1.5, pp.~36--37]{Robert:2000}).
If $s \in \ZZ$, then $p^e \dmid s$ means that $p^e \mid s$ but $p^{e+1} \nmid s$,
or equivalently, $e = \pval_p(s)$.

The Bernoulli numbers satisfy the following properties
(cf.~\cite[Chap. 9.5, pp.~63--68]{Cohen:2007}).
The first few nonzero values are
\begin{equation} \label{eq:Bn-values}
  B_0 = 1,\; B_1 = -\frac{1}{2},\; B_2 = \frac{1}{6},\; B_4 = -\frac{1}{30},\;
  B_6 = \frac{1}{42},
\end{equation}
while $B_n=0$ for odd $n \geq 3$. For even $n \geq 2$ the von Staudt--Clausen
theorem states that the denominator of $B_n$ equals
\begin{equation} \label{eq:Bn-denom}
  \D_n = \prod_{p-1 \, \mid \, n} p \quad (n \in 2\NN).
\end{equation}
Thus, all nonzero Bernoulli numbers have a squarefree denominator.
Moreover, for even $n \geq 2$ the $p$-adic valuation of the
\textdef{divided Bernoulli number} $B_n/n$ is
\begin{equation} \label{eq:Bn-n-pval}
  \pval_p \mleft( \frac{B_n}{n} \mright) = \left\{
    \begin{array}{rl}
      -(\pval_p(n)+1), & \hbox{if $p-1 \mid n$,} \\
      \geq 0, & \hbox{else.}
    \end{array}
  \right.
\end{equation}

Now let $m$, $n$, and $r$ be positive integers.
The Bernoulli polynomials satisfy as Appell polynomials the general relation
\begin{equation} \label{eq:bernpoly-add}
  B_n(x+y) = \sum_{k=0}^{n} \binom{n}{k} B_k(y) \, x^{n-k},
\end{equation}
of which \eqref{eq:bernpoly-def} is a special case, as well as the reflection
formula
\begin{equation} \label{eq:bernpoly-reflect}
  B_n(1-x) = (-1)^n B_n(x)
\end{equation}
(see \cite[Chap.~3.5, pp.~114--115]{Prasolov:2010}).
Further, denote by $\BP_{m,r}^n$ the number
\begin{equation} \label{eq:BP-def}
  \BP_{m,r}^n
    := m^n \left( B_n \mleft( \frac{r}{m} \mright) - B_n \right)
    = \sum_{k=0}^{n-1} \binom{n}{k} B_k \, m^k r^{n-k}.
\end{equation}
Almkvist and Meurman \cite[Thm.~2, p.~104]{Almkvist&Meurman:1991} showed that
\begin{equation} \label{eq:BP-Z}
  \BP_{m,r}^n \in \ZZ.
\end{equation}
Actually, \eqref{eq:BP-Z} holds for all $r \in \ZZ$
(cf.~\cite[Thm.~9.5.29, pp.~70--71]{Cohen:2007}).
We also point out an analog to \eqref{eq:bernpoly-add} for $r_1, r_2 \in \ZZ$,
namely,
$$
  \BP_{m, r_1 + r_2}^n = \sum_{k=0}^{n} \binom{n}{k} \BP_{m,r_1}^k r_2^{n-k}
  \, + \, \BP_{m,r_2}^n.
$$

The integers $\BP_{m,r}^n$ satisfy a useful divisibility property, which we
need later on.
The following lemma is part of \cite[Thm.~11.4.12, pp.~327--329]{Cohen:2007},
but we give here a clearer and simpler proof.

\begin{lemma} \label{lem:BP-mod-power}
If $m,n \geq 1$, $r \in \ZZ$, a prime $p \nmid m$, and $0 \le e \le \pval_p(n)$,
then
\begin{equation} \label{eq:BP-mod-power}
  \BP_{m,r}^n \equiv 0 \pmod{p^e}.
\end{equation}
\end{lemma}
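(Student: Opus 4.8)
The plan is to prove the stronger statement $\pval_p(\BP_{m,r}^n) \geq \pval_p(n)$, which yields \eqref{eq:BP-mod-power} for every $e \leq \pval_p(n)$ at once. The starting point is the classical polynomial identity $B_n(x) - B_n = n\,S_{n-1}(x)$ (the case $m=1$, $r=0$ of Bernoulli's formula, read as an identity in $\QQ[x]$). Substituting $x = r/m$ and multiplying by $m^n$ turns this into
$$
  \BP_{m,r}^n = n \cdot m^n S_{n-1}\!\left( \frac{r}{m} \right).
$$
Since $p \nmid m$ and $r \in \ZZ$, the rational number $r/m$ lies in $\ZZ_p$, and the entire problem reduces to showing that the single factor $m^n S_{n-1}(r/m)$ is a $p$-adic integer; the prefactor $n$ then supplies the required $\pval_p(n)$.

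Next I would exploit that $S_{n-1}(x)$ is an integer-valued polynomial of degree $n$, so in the binomial basis it has integer coordinates,
$$
  S_{n-1}(x) = \sum_{j=0}^{n} a_j \binom{x}{j}, \qquad a_j = \Delta^j S_{n-1}(0) \in \ZZ.
$$
By linearity it then suffices to check that each term $m^n \binom{r/m}{j}$ lies in $\ZZ_p$. Writing out the binomial coefficient and clearing the powers of $m$ from its denominator gives
$$
  m^n \binom{r/m}{j} = m^{\,n-j} \cdot \frac{\prod_{i=0}^{j-1}(r-im)}{j!},
$$
and since $0 \leq j \leq n$ the factor $m^{n-j}$ is an ordinary integer; the crux is therefore the $p$-integrality of $\prod_{i=0}^{j-1}(r-im)/j!$.

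The main—and really the only substantial—step is the valuation bound
$$
  \pval_p\!\left( \prod_{i=0}^{j-1}(r-im) \right) \geq \pval_p(j!).
$$
Here the hypothesis $p \nmid m$ is decisive: for each $t \geq 1$ the residue $m$ is invertible modulo $p^t$, so the congruence $r - im \equiv 0 \pmod{p^t}$ pins $i$ down to a single residue class modulo $p^t$, which is met by at least $\lfloor j/p^t \rfloor$ of the indices $i \in \{0,\dots,j-1\}$. Summing $\pval_p(r-im) = \sum_{t \geq 1} [\,p^t \mid r-im\,]$ over $i$ and comparing with Legendre's formula $\pval_p(j!) = \sum_{t \geq 1} \lfloor j/p^t \rfloor$ yields the bound. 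Combining the three steps gives $m^n S_{n-1}(r/m) \in \ZZ_p$ and hence $\pval_p(\BP_{m,r}^n) \geq \pval_p(n)$. I expect this counting argument to be the heart of the proof; note that it is insensitive to the sign of $r$ and treats $p = 2$ on the same footing as odd primes, so no case distinctions are needed, and it uses nothing about the Bernoulli numbers beyond the elementary fact that $S_{n-1}$ is integer-valued.
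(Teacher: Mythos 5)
Your proof is correct, and it takes a genuinely different route from the paper's. The paper works directly with the expansion $\BP_{m,r}^n = \sum_{k=0}^{n-1}\binom{n}{k}B_k m^k r^{n-k}$, splits into the cases $p \mid r$ and $p \nmid r$, and in the harder case invokes the von Staudt--Clausen theorem (squarefree denominators), the valuation formula $\pval_p(B_k/k) = -(\pval_p(k)+1)$ when $p-1 \mid k$, and the Euler--Fermat theorem to show that the loss of $t+1$ from $B_k/k$ is exactly recovered by $\pval_p(u^k-1) \geq t+1$. You instead factor out $n$ at the level of polynomials via $B_n(x)-B_n = nS_{n-1}(x)$, so that the whole burden shifts to showing $m^nS_{n-1}(r/m) \in \ZZ_p$, which you settle by writing the integer-valued polynomial $S_{n-1}$ in the binomial basis and verifying $\prod_{i=0}^{j-1}(r-im)/j! \in \ZZ_p$ by the Legendre-type count of indices $i$ in the single residue class $i \equiv rm^{-1} \pmod{p^t}$ (with the degenerate case of a vanishing factor being harmless). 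Your argument is more elementary and uniform: it needs no case distinction on $p \mid r$, treats $p=2$ like any other prime, and uses nothing about Bernoulli numbers beyond Bernoulli's formula itself; in effect it reproves the Almkvist--Meurman integrality \eqref{eq:BP-Z} (for $p \nmid m$) along the way rather than citing it. What the paper's route buys is that it stays entirely within the divided-Bernoulli-number framework the authors use throughout, making visible exactly where von Staudt--Clausen enters; your route buys self-containedness and a cleaner reason why the factor $\pval_p(n)$ appears, namely that $n$ literally factors out of $B_n(x)-B_n$.
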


\begin{proof}
It suffices to prove the case $e = \pval_p(n)$. If $e = 0$, then we are trivially
done. So let $p^e \dmid n$ with
$$
  n > e = \pval_p(n) \geq 1.
$$
We split the proof into two cases as follows.

Case $p \mid r$:
From \eqref{eq:Bn-values} and \eqref{eq:BP-def} we deduce that
\begin{align*}
  \BP_{m,r}^n
    &= \sum_{k=0}^{n-1} \binom{n}{k} B_k \, m^k r^{n-k} \\
    &= r^n + \sum_{k=1}^{n-1} n \binom{n-1}{k-1} B_{n-k} \, m^{n-k} \frac{r^k}{k}.
\end{align*}
Since $p \mid r$, we have $\pval_p(r^n) \geq n$ and
$\pval_p ( r^k/k ) \geq 1$ for all $k \geq 1$.
If $B_{n-k} \neq 0$, then $\pval_p(B_{n-k}) \geq -1$, since the denominator is
squarefree. In this case we obtain
$$
  \pval_p \mleft( n \, B_{n-k} \, \frac{r^k}{k} \mright) \geq e.
$$
Considering all summands, we finally infer that \eqref{eq:BP-mod-power} holds.

Case $p \nmid r$:
Since $n \geq 2$, we have by~\eqref{eq:bernpoly-def}
and~\eqref{eq:bernpoly-reflect} that
$$
  B_n(1) - B_n = \sum_{k=0}^{n-1} \binom{n}{k} B_k = 0,
$$
which we use in the second step below.
Set $u := m/r \in \ZZ_p^\times$. As in the first case above, we derive that
\begin{align*}
  r^{-n} \, \BP_{m,r}^n
    &= \sum_{k=0}^{n-1} \binom{n}{k} B_k \, u^k \\
    &= \sum_{k=0}^{n-1} \binom{n}{k} B_k \cdot (u^k-1) \\
    &= - \frac{n}{2} (u-1) + \sum_{\substack{k=2\\2 \, \mid \, k}}^{n-1}
       n \binom{n-1}{k-1} \frac{B_k}{k} \, (u^k-1).
\end{align*}
In both cases $p=2$ and $p \geq 3$, we have
$$
  \frac{n}{2} (u-1) \equiv 0 \pmod{p^e}.
$$
Since \eqref{eq:Bn-n-pval} implies $B_k/k \in \ZZ_p$ if $k \geq 2$ is even
and $p-1 \nmid k$, we get
$$
  r^{-n} \, \BP_{m,r}^n
  \equiv \sum_{\substack{k=2\\2 \, \mid \, k\\p-1 \, \mid \, k}}^{n-1}
    n \binom{n-1}{k-1} \frac{B_k}{k} \, (u^k-1) \pmod{p^e}.
$$
Now fix one $k$ of the above sum. We then have the decomposition
$$
  k = a \, (p-1) \, p^t = a \, \varphi(p^{t+1}),
$$
where $p \nmid a$, $t = \pval_p(k)$, and $\varphi(\cdot)$ is Euler's totient
function. By assumption $u$ is a unit in $\ZZ_p$ and so is
$\hat{u} := u^a \in \ZZ_p^\times$. Euler--Fermat's theorem shows that
$$
  u^k \equiv \hat{u}^{\varphi(p^{t+1})} \equiv 1 \pmod{p^{t+1}}.
$$
Thus, $\pval_p(u^k-1) \geq t+1$. Since $\pval_p(B_k/k) = -(t+1)$ by
\eqref{eq:Bn-n-pval}, we achieve finally that
$$
  \pval_p \mleft( n \, \frac{B_k}{k} \, (u^k-1) \mright)
  \geq e - (t+1) + (t+1) = e,
$$
implying that
$$
  r^{-n} \, \BP_{m,r}^n \equiv 0 \pmod{p^e}
$$
and showing the result.
\end{proof}

%%%%%%%%%%%%%%%%%%%%%%%%%%%%%%%%%%%%%%%%%%%%%%%%%%%%%%%%%%%%%%%%%%%%%%%%%%%%%%%%
% SEC: PROOF
%%%%%%%%%%%%%%%%%%%%%%%%%%%%%%%%%%%%%%%%%%%%%%%%%%%%%%%%%%%%%%%%%%%%%%%%%%%%%%%%

\section{Proof of \texorpdfstring{Theorem~\ref{thm:main}}{Theorem~2}}

Before giving the proof of Theorem~\ref{thm:main},
we need several lemmas with some complementary results.
The next lemma easily shows a related partial result toward Theorem~\ref{thm:main},
while the full proof of this theorem requires much more effort.

\begin{lemma}
We have
\begin{align*}
  \denom \bigl( (n+1) \SP_{m,r}^n(x) \bigr)
  &= \denom \bigl( m^n (B_{n+1}(x) - B_{n+1}) \bigr) \\
  &= \frac{\DD_{n+1}}{\gcd(\DD_{n+1},m)}.
\end{align*}
\end{lemma}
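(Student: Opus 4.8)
The plan is to establish the two stated equalities one after the other, using the middle quantity $\denom\bigl(m^n(B_{n+1}(x) - B_{n+1})\bigr)$ as a pivot. For the first equality, I would start from the generalized Bernoulli formula \eqref{eq:sum-ap-formula}, which after clearing the factor $\tfrac{1}{n+1}$ reads $(n+1)\SP_{m,r}^n(x) = m^n\bigl(B_{n+1}(x + r/m) - B_{n+1}(r/m)\bigr)$. Expanding $B_{n+1}(x + r/m)$ as a polynomial in $x$ via the Appell relation \eqref{eq:bernpoly-add} (the top term $B_{n+1}(r/m)$ cancelling) and comparing with the expansion of $B_{n+1}(x)$ from \eqref{eq:bernpoly-def}, the polynomials $(n+1)\SP_{m,r}^n(x)$ and $m^n(B_{n+1}(x) - B_{n+1})$ have, in degree $x^{n+1-k}$ for $0 \le k \le n$, coefficients differing by $m^n\binom{n+1}{k}\bigl(B_k(r/m) - B_k\bigr)$. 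Recognizing $B_k(r/m) - B_k = m^{-k}\BP_{m,r}^k$ from \eqref{eq:BP-def}, this difference equals $m^{n-k}\binom{n+1}{k}\BP_{m,r}^k$, which is an integer since $0 \le k \le n$ forces $m^{n-k} \in \ZZ$ while $\BP_{m,r}^k \in \ZZ$ by \eqref{eq:BP-Z}. Hence the two polynomials differ by an element of $\ZZ[x]$, and two such polynomials admit exactly the same integer denominators (multiplying by a $d \in \NN$ clears one iff it clears the other), so they share the same $\denom$, giving the first equality.

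For the second equality I would invoke the elementary fact that, for $f \in \QQ[x]$ and a positive integer $c$, one has $\denom(cf) = \denom(f)/\gcd\bigl(\denom(f), c\bigr)$; this follows by comparing $p$-adic valuations, since the valuation $\max\bigl(0, v_p(\denom f) - v_p(c)\bigr)$ is monotone in $v_p(\denom f)$ and so commutes with the $\lcm$ over coefficients that defines the denominator. Applying this with $f = B_{n+1}(x) - B_{n+1}$, so that $\denom(f) = \DD_{n+1}$ by \eqref{eq:Dn-def}, and $c = m^n$ yields $\denom\bigl(m^n(B_{n+1}(x) - B_{n+1})\bigr) = \DD_{n+1}/\gcd(\DD_{n+1}, m^n)$. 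It remains to replace $m^n$ by $m$ in the gcd, which is legitimate because $\DD_{n+1}$ is squarefree by the product formula \eqref{eq:Dn-formula}: for each prime with $p \dmid \DD_{n+1}$ the gcd acquires exactly one factor of $p$ precisely when $p \mid m$, independently of the exponent $n \ge 1$, so $\gcd(\DD_{n+1}, m^n) = \gcd(\DD_{n+1}, m)$.

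The computations here are all routine, and the only steps requiring genuine input are the identification of the coefficient difference with $m^{n-k}\binom{n+1}{k}\BP_{m,r}^k$ and the appeal to the Almkvist--Meurman integrality \eqref{eq:BP-Z}. I expect the only mild obstacle to be bookkeeping: verifying that the binomial coefficient and the power of $m$ combine, after the index shift, into precisely the shape of \eqref{eq:BP-def}, and confirming that the boundary terms $k = 0$ and the top degree cause no trouble (for $k=0$ the difference vanishes outright, since $\BP_{m,r}^0 = 0$). The squarefreeness reduction from $m^n$ to $m$ is the single point where Theorem~\ref{thm:denom-bernpoly} is genuinely used, but it is immediate once squarefreeness of $\DD_{n+1}$ is noted.
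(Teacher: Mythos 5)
Your proposal is correct and follows essentially the same route as the paper: decompose $(n+1)\SP_{m,r}^n(x)$ so that it differs from $m^n\bigl(B_{n+1}(x)-B_{n+1}\bigr)$ by the integer polynomial with coefficients $\binom{n+1}{k}m^{n-k}\BP_{m,r}^k$ (integral by the Almkvist--Meurman result \eqref{eq:BP-Z}), conclude equality of denominators since adding an element of $\ZZ[x]$ does not change them, and then compute $\denom\bigl(m^n(B_{n+1}(x)-B_{n+1})\bigr)=\DD_{n+1}/\gcd(\DD_{n+1},m^n)=\DD_{n+1}/\gcd(\DD_{n+1},m)$ using squarefreeness of $\DD_{n+1}$. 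The only difference is cosmetic: you spell out the $p$-adic justification of $\denom(cf)=\denom(f)/\gcd(\denom(f),c)$, which the paper takes as read.
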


\begin{proof}
By rewriting $\SP_{m,r}^n(x)$ as given in \eqref{eq:sum-ap-formula}, and using
\eqref{eq:bernpoly-def} and \eqref{eq:bernpoly-add}, we easily derive that
\begin{alignat}{1}
  (n+1) \SP_{m,r}^n(x) &= m^n \sum_{k=0}^{n} \binom{n+1}{k} \,
        B_k \mleft( \frac{r}{m} \mright) \, x^{n+1-k} \nonumber\\
    &=  m^n \sum_{k=0}^{n} \binom{n+1}{k} \left( B_k \mleft( \frac{r}{m} \mright)
        - B_k + B_k \right) x^{n+1-k}  \nonumber\\
    &= \sum_{k=0}^{n} \binom{n+1}{k} m^{n-k}
       \underbrace{m^k \left( B_k \mleft( \frac{r}{m} \mright) - B_k \right)}_{
       \BP_{m,r}^k \, \in \, \ZZ \text{ by } \eqref{eq:BP-Z}} x^{n+1-k}
       \label{eq:sumpart-1} \\
    & \quad + m^n \, \bigl( B_{n+1}(x) - B_{n+1} \bigr). \label{eq:sumpart-2}
\end{alignat}
By applying the simple observation that if $f(x) \in \ZZ[x]$ and
$g(x) \in \QQ[x]$, then
\begin{equation} \label{eq:denom-poly-add}
  \denom \bigl( f(x) + g(x) \bigr) = \denom \bigl( g(x) \bigr),
\end{equation}
we infer that
$$
  \denom \bigl( (n+1) \SP_{m,r}^n(x) \bigr)
  = \denom \bigl( m^n (B_{n+1}(x) - B_{n+1}) \bigr).
$$
Finally, from \eqref{eq:Dn-def} and~\eqref{eq:Dn-formula} we deduce that
$$
  \denom \bigl( m^n (B_{n+1}(x) - B_{n+1}) \bigr)
  = \frac{\DD_{n+1}}{\gcd(\DD_{n+1},m^n)}
  = \frac{\DD_{n+1}}{\gcd(\DD_{n+1},m)},
$$
the latter equation holding because $\DD_{n+1}$ is squarefree.
This completes the proof.
\end{proof}

\begin{lemma} \label{lem:coeff-cnk}
For positive integers $k \leq n$, define the rational number
$$
  c_{n,k} := \frac{1}{k} \binom{n}{k-1}.
$$
Then we have the following properties:
\begin{enumerate}

\item Symmetry:
  $$
    c_{n,k} = c_{n,n+1-k}.
  $$

\item Denominator:
  $$
    \denom(c_{n,k}) \mid \gcd(n+1, k), \quad \denom(c_{n,k}) \leq \frac{n+1}{2}.
  $$

\item Integrality:
  $$
    \text{If $k=1$ or $k=n$ or $n+1$ is prime, then $c_{n,k} \in \ZZ$}.
  $$

\end{enumerate}
\end{lemma}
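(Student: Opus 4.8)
The plan is to prove the three assertions about $c_{n,k} = \frac{1}{k}\binom{n}{k-1}$ in the order listed, since each part is largely self-contained and the symmetry in~(i) will be the main lever for the bound in~(ii).

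\textbf{Part (i): Symmetry.}
First I would expand both sides directly from the definition. We have
\begin{equation*}
  c_{n,k} = \frac{1}{k}\binom{n}{k-1} = \frac{1}{k}\cdot\frac{n!}{(k-1)!\,(n-k+1)!} = \frac{n!}{k!\,(n+1-k)!}.
\end{equation*}
This last expression is visibly symmetric under $k \mapsto n+1-k$, so substituting $k \to n+1-k$ reproduces the same quantity and gives $c_{n,k} = c_{n,n+1-k}$. I expect this to be immediate once the factorial is written symmetrically; the only care needed is checking the index range $1 \le k \le n$ is preserved under the involution $k \mapsto n+1-k$, which it is.

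\textbf{Part (ii): Denominator bound.}
The key algebraic identity is $(n+1)\,c_{n,k} = \frac{n+1}{k}\binom{n}{k-1}$. I would exploit the standard absorption identity $\frac{n+1}{k}\binom{n}{k-1} = \binom{n+1}{k}$, which shows at once that $(n+1)\,c_{n,k} = \binom{n+1}{k} \in \ZZ$; hence $\denom(c_{n,k}) \mid n+1$. Separately, $k\,c_{n,k} = \binom{n}{k-1} \in \ZZ$ gives $\denom(c_{n,k}) \mid k$. Combining these two divisibilities yields $\denom(c_{n,k}) \mid \gcd(n+1,k)$, which is the first claim. For the bound $\denom(c_{n,k}) \le \frac{n+1}{2}$, I would use symmetry from part~(i) to restrict attention to $1 \le k \le \frac{n+1}{2}$: since $\denom(c_{n,k}) \mid k$ in this range we get $\denom(c_{n,k}) \le k \le \frac{n+1}{2}$, and the values for $k > \frac{n+1}{2}$ are covered by the symmetry $c_{n,k} = c_{n,n+1-k}$. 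I anticipate the only subtlety is the boundary case when $n+1$ is even and $k = \frac{n+1}{2}$, which already satisfies the bound with equality and requires no special argument.

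\textbf{Part (iii): Integrality criteria.}
These follow from the divisibility $\denom(c_{n,k}) \mid \gcd(n+1,k)$ established in~(ii). If $k = 1$, then $\gcd(n+1,1) = 1$, so $c_{n,1} \in \ZZ$; concretely $c_{n,1} = \binom{n}{0} = 1$. If $k = n$, then by symmetry $c_{n,n} = c_{n,1} = 1 \in \ZZ$ (alternatively $\gcd(n+1,n) = 1$). If $n+1$ is prime, then for $1 \le k \le n$ we have $\gcd(n+1,k) = 1$ since no such $k$ shares the prime factor $n+1$, whence $\denom(c_{n,k}) = 1$. The main obstacle, if any, is purely bookkeeping: ensuring the gcd computations handle the endpoints correctly and that the absorption identity is invoked with the right index shift. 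No result beyond elementary binomial identities and part~(i) is needed, so I expect the proof to be short and routine.
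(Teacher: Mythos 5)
Your proposal is correct and follows essentially the same route as the paper: the key identity $c_{n,k}=\frac{1}{n+1}\binom{n+1}{k}$ (your absorption identity) gives both the symmetry and the divisibility $\denom(c_{n,k})\mid\gcd(n+1,k)$, from which (iii) follows. The only cosmetic difference is in deducing $\denom(c_{n,k})\le\frac{n+1}{2}$: you reduce to $k\le\frac{n+1}{2}$ via symmetry, while the paper notes that $\gcd(n+1,k)$ is a proper divisor of $n+1$ since $k<n+1$; both are valid one-line conclusions from the same facts.
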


\begin{proof}
We first observe that
\begin{equation} \label{eq:coeff-cnk}
  c_{n,k} = \frac{1}{k} \binom{n}{k-1} = \frac{1}{n+1} \binom{n+1}{k},
\end{equation}
which shows the symmetry in (i). From \eqref{eq:coeff-cnk} it also follows that
$\denom(c_{n,k})$ must divide both of the integers $n+1$ and $k$. Thus,
$$
  \denom(c_{n,k}) \mid \gcd(n+1, k).
$$
Since $k < n+1$, we then infer that
$\denom(c_{n,k}) \leq (n+1)/2$. This shows (ii).
If $k=1$ or $k=n$, then $c_{n,k} = 1$. If $n+1$ is prime, then
$\gcd(n+1, k) = 1$ as $k \leq n$, so $\denom(c_{n,k}) = 1$. This proves (iii).
\end{proof}

\begin{lemma} \label{lem:coeff-Z}
If $m,n,r \geq 1$ and $0 \leq k \leq n$, then
\begin{equation} \label{eq:coeff-Z}
  \frac{m^n}{n+1} \binom{n+1}{k} \left( B_k \mleft( \frac{r}{m} \mright)
  - B_k \right) \in \ZZ.
\end{equation}
\end{lemma}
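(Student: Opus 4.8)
The plan is to factor the target quantity into three pieces whose $p$-adic sizes can be controlled separately, and then to verify $p$-integrality one prime at a time. Writing $m^n = m^{n-k}m^k$ and using $m^k\bigl(B_k(r/m)-B_k\bigr) = \BP_{m,r}^k$ from \eqref{eq:BP-def} together with the identity $\frac{1}{n+1}\binom{n+1}{k} = c_{n,k}$ from \eqref{eq:coeff-cnk}, I would rewrite the left-hand side of \eqref{eq:coeff-Z} as
$$
  \frac{m^n}{n+1}\binom{n+1}{k}\left(B_k\mleft(\frac{r}{m}\mright)-B_k\right) = m^{n-k}\,c_{n,k}\,\BP_{m,r}^k.
$$
The case $k=0$ is trivial because $B_0(r/m)-B_0 = 0$, so I may assume $1 \le k \le n$. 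By \eqref{eq:BP-Z} the factor $\BP_{m,r}^k$ is an integer and $m^{n-k}$ is an integer, so any denominator can only come from $c_{n,k}$; hence it suffices to show $\pval_p\bigl(m^{n-k}c_{n,k}\BP_{m,r}^k\bigr) \ge 0$ for every prime $p$ dividing $\denom(c_{n,k})$.

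Fix such a prime and set $e := \pval_p\bigl(\denom(c_{n,k})\bigr) \ge 1$, so that $\pval_p(c_{n,k}) = -e$. Lemma \ref{lem:coeff-cnk}(ii) gives $\denom(c_{n,k}) \mid \gcd(n+1,k)$, whence
$$
  e \le \min\bigl(\pval_p(n+1),\,\pval_p(k)\bigr).
$$
If $p \nmid m$, then $m^{n-k}$ is a $p$-adic unit, and I would invoke Lemma \ref{lem:BP-mod-power} with the exponent $e$ (legitimate since $0 \le e \le \pval_p(k)$) to obtain $\pval_p(\BP_{m,r}^k) \ge e$. Adding valuations, $\pval_p\bigl(m^{n-k}c_{n,k}\BP_{m,r}^k\bigr) \ge 0 - e + e = 0$, as desired.

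The remaining and more delicate case is $p \mid m$, where Lemma \ref{lem:BP-mod-power} is unavailable and the needed divisibility must instead be extracted from the power $m^{n-k}$. The key combinatorial observation is that $p^e \mid n+1$ and $p^e \mid k$ force $p^e \mid (n+1)-k$, and since $n+1-k \ge 1$ this gives $n+1-k \ge p^e$, hence
$$
  n-k \ge p^e - 1 \ge 2^e - 1 \ge e.
$$
Because $\pval_p(m) \ge 1$, we then get $\pval_p(m^{n-k}) \ge n-k \ge e$, which again compensates $\pval_p(c_{n,k}) = -e$ while $\BP_{m,r}^k$ contributes a nonnegative valuation.

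I expect this last case to be the main obstacle, since it is precisely where the Almkvist--Meurman-type divisibility of Lemma \ref{lem:BP-mod-power} breaks down and one must instead notice that the constraint $p^e \mid \gcd(n+1,k)$ secretly pins down the size of $n-k$. Combining both cases over all primes dividing $\denom(c_{n,k})$ yields the claimed integrality in \eqref{eq:coeff-Z}.
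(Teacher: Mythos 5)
Your proposal is correct and follows essentially the same route as the paper: the same factorization into $m^{n-k}\,c_{n,k}\,\BP_{m,r}^k$, the same case split on whether $p \mid m$, with Lemma~\ref{lem:BP-mod-power} supplying $p^e \mid \BP_{m,r}^k$ when $p \nmid m$, and the observation $p^e \mid \gcd(n+1,k) \Rightarrow p^e \mid n+1-k \Rightarrow n-k \ge p^e - 1 \ge e$ supplying $p^e \mid m^{n-k}$ when $p \mid m$. The only cosmetic difference is that you derive $p^e \mid n+1-k$ directly rather than via the symmetry $c_{n,k} = c_{n,n+1-k}$, which lets you skip the paper's separate treatment of the cases $k=1$, $k=n$, and $n+1$ prime.
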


\begin{proof}
If $k=0$, then the quantity in \eqref{eq:coeff-Z} vanishes by $B_0(x)-B_0 = 0$.
For $1 \leq k \leq n$, we can rewrite the quantity in \eqref{eq:coeff-Z} by
\eqref{eq:BP-def} and \eqref{eq:coeff-cnk} as
\begin{equation} \label{eq:triple-prod}
  c_{n,k} \, \times \, m^{n-k} \, \times \, \BP_{m,r}^k,
\end{equation}
where $\BP_{m,r}^k \in \ZZ$ by \eqref{eq:BP-Z} and
$c_{n,k} = \frac{1}{k} \binom{n}{k-1} \in \QQ$.
We have to show that \eqref{eq:triple-prod} lies in $\ZZ$.
If $k=1$ or $k=n$ or $n+1$ is prime, then $c_{n,k} \in \ZZ$ by
Lemma~\ref{lem:coeff-cnk}.
We can now assume that $n \geq 3$, $1 < k < n$, and $d := \denom(c_{n,k}) > 1$.
For each prime power divisor $p^e \dmid d$ we consider two cases, which
together imply the integrality of \eqref{eq:triple-prod}.

Case $p \nmid m$: Since $d \mid k$,
we have $p^e \mid \BP_{m,r}^k$ by Lemma~\ref{lem:BP-mod-power}.

Case $p \mid m$: We show that $p^e \mid m^{n-k}$, or equivalently,
\begin{equation} \label{eq:estimate-nek}
  n+1 > e+k.
\end{equation}
As $p^e \mid k$, by symmetry in Lemma~\ref{lem:coeff-cnk} we also have
\mbox{$p^e \mid n+1-k$}, so $e < n+1-k$ and \eqref{eq:estimate-nek} holds.
This completes the proof.
\end{proof}

\begin{proof}[Proof of Theorem~\ref{thm:main}]
To prove the last statement, it suffices to show that for $r \geq 0$
\begin{equation} \label{eq:sum-ap-diff-Z}
  \SP_{m,r}^n(x) - \SP_{m,0}^n(x) \in \ZZ[x].
\end{equation}
By \eqref{eq:sumpart-1} and \eqref{eq:sumpart-2} we have
\begin{equation} \label{eq:sum-ap-bernpoly}
  \SP_{m,r}^n(x) = \frac{m^n}{n+1} \bigl( B_{n+1}(x) - B_{n+1} \bigr) + h(x),
\end{equation}
where $(n+1)h(x) = f(x) \in \ZZ[x]$ as given by \eqref{eq:sumpart-1}.
By Lemma~\ref{lem:coeff-Z} it turns out that the coefficients of $h(x)$ are
already integral, and thus $h(x) \in \ZZ[x]$. Since by \eqref{eq:sum-ap-formula}
$$
  \SP_{m,0}^n(x) = \frac{m^n}{n+1} \bigl( B_{n+1}(x) - B_{n+1} \bigr),
$$
relation \eqref{eq:sum-ap-diff-Z} follows.

Applying the rule \eqref{eq:denom-poly-add} to \eqref{eq:sum-ap-bernpoly}
and using \eqref{eq:Dn-def} along with the fact that $B_{n+1}(x)$ is monic,
we then infer that
\begin{equation} \label{eq:sum-ap-denom-1}
  \denom \bigl( \SP_{m,r}^n(x) \bigr)
  = \denom \mleft( \frac{m^n}{(n+1)\DD_{n+1}} \mright).
\end{equation}
We have to show that \eqref{eq:sum-ap-denom-1} implies
\eqref{eq:sum-ap-denom-formula}.

In the following trivial cases we are done: Case $m=1$; cases $n=1,3$, since
$\DD_2 = \DD_4 = 1$; and case $n=2$, since $n+1=3$ and $\DD_3=2$.

So let $m \geq 2$ and $n \geq 4$. If a prime power $p^e \dmid n+1$, then $e < n$.
Consequently, we deduce that
\begin{equation} \label{eq:gcd-m-power}
  \gcd(n+1,m^n) = \gcd(n+1,m^{n-1}).
\end{equation}
Then by splitting $m^n$ into $m^{n-1} \cdot m$ in \eqref{eq:sum-ap-denom-1}
and applying \eqref{eq:gcd-m-power} and the fact that $\DD_{n+1}$ is squarefree,
we infer that \eqref{eq:sum-ap-denom-formula} holds.

Since $\denom \bigl( S_n(x) \bigr) = (n+1) \, \DD_{n+1}$ by
Theorem~\ref{thm:denom-bernpoly}, we see at once that
\eqref{eq:sum-ap-denom-formula} implies
$$
  \denom \bigl( \SP_{m,r}^n(x) \bigr) \mid \denom \bigl( S_n(x) \bigr).
$$
As a result of \eqref{eq:sum-ap-denom-1},
the denominator of $\SP_{m,r}^n(x)$ is independent of $r$.
This completes the proof of Theorem~\ref{thm:main}.
\end{proof}

%%%%%%%%%%%%%%%%%%%%%%%%%%%%%%%%%%%%%%%%%%%%%%%%%%%%%%%%%%%%%%%%%%%%%%%%%%%%%%%%
% SEC: PROOF
%%%%%%%%%%%%%%%%%%%%%%%%%%%%%%%%%%%%%%%%%%%%%%%%%%%%%%%%%%%%%%%%%%%%%%%%%%%%%%%%

\section{Proofs of \texorpdfstring{Theorem~\ref{thm:main2}}{Theorem~3}
and \texorpdfstring{Corollary~\ref{cor:Dn-relation}}{Corollary~2}} \label{sec:proofs2}

Before we give the proofs, we need some definitions and lemmas.
Recall that, given a prime $p$, any positive integer $n$ can be written
in base $p$ as a unique finite \textdef{$p$-adic expansion}
$$
  n = \alpha_0 + \alpha_1 \, p + \dotsb + \alpha_t \, p^t
  \quad (0 \leq \alpha_j \leq p-1).
$$
This expansion defines the \textdef{sum-of-digits function}
$$
  s_p(n) := \alpha_0 + \alpha_1 + \dotsb + \alpha_t,
$$
which satisfies the congruence
\begin{equation} \label{eq:sp-congr}
  s_p( n ) \equiv n \pmod{p-1}.
\end{equation}
Actually, these properties hold for any integer base $b \geq 2$ in place of
a prime~$p$.

The following lemma (see \cite[Chap.~5.3, p.~241]{Robert:2000})
shows the relation between $s_p(n)$ and $s_p(n+1)$.

\begin{lemma}
If $n \geq 1$ and $p$ is a prime, then
\begin{align}
  s_p(n+1) &= s_p(n) + 1 - (p-1) \, \pval_p(n+1). \nonumber \\
\shortintertext{In particular,}
  s_p(n+1) &\leq s_p(n) \hspace*{2.68em} \iff \quad \text{$p \mid n+1$},
  \nonumber \\
\shortintertext{while}
  s_p(n+1) &= s_p(n) + 1 \quad \iff \quad \text{$p \nmid n+1$}.
  \label{eq:sp-values-ndiv}
\end{align}
\end{lemma}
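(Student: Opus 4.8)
The plan is to establish the single identity $s_p(n+1) = s_p(n) + 1 - (p-1)\,\pval_p(n+1)$ first, since both ``in particular'' equivalences follow immediately from it by inspecting the sign of the correction term $1 - (p-1)\,\pval_p(n+1)$. The identity itself records how the base-$p$ digit sum responds to the carry propagation that occurs when one adds $1$ to $n$, so the heart of the argument is purely combinatorial.

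First I would set $v := \pval_p(n+1)$ and observe that $v$ counts exactly the trailing base-$p$ digits of $n$ that equal $p-1$. Writing $n = \alpha_0 + \alpha_1 p + \dotsb + \alpha_t p^t$, the condition $p^v \dmid n+1$ is equivalent to having $\alpha_0 = \dotsb = \alpha_{v-1} = p-1$ together with $\alpha_v \leq p-2$, where we set $\alpha_v := 0$ if $v > t$ (i.e.\ if $n$ consists entirely of digits $p-1$). Adding $1$ then turns those $v$ trailing digits from $p-1$ into $0$ and increments $\alpha_v$ by $1$, while leaving every higher digit unchanged. Tracking the net effect on the digit sum yields
$$
  s_p(n+1) = s_p(n) - v\,(p-1) + 1,
$$
which is the claimed identity.

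With the identity in hand, the two equivalences are routine. Since $p - 1 \geq 1$, the correction $1 - (p-1)v$ is at most $0$ exactly when $v \geq 1$, that is, exactly when $p \mid n+1$; this gives $s_p(n+1) \leq s_p(n) \iff p \mid n+1$. Likewise it equals $+1$ exactly when $v = 0$, i.e.\ when $p \nmid n+1$, giving $s_p(n+1) = s_p(n) + 1 \iff p \nmid n+1$.

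The only delicate point — and the step I would watch most carefully — is the boundary case where every digit of $n$ equals $p-1$, so that adding $1$ produces a new leading digit; one must phrase the ``increment $\alpha_v$'' step so that it covers this case as well, treating the absent digit $\alpha_v$ as $0$, and separately confirm the small prime $p = 2$, where $p-1 = 1$ and the trailing-digit condition reads off directly. As a shortcut one could instead invoke Legendre's formula $\pval_p(n!) = (n - s_p(n))/(p-1)$: applying it to $(n+1)! = (n+1)\cdot n!$ recovers the identity in one line via $\pval_p((n+1)!) - \pval_p(n!) = \pval_p(n+1)$, bypassing the carry bookkeeping entirely.
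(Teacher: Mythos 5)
Your argument is correct and complete. The paper itself gives no proof of this lemma, only a citation to Robert's \emph{A Course in $p$-adic Analysis}; the standard proof there is exactly your carry-propagation analysis (the $v$ trailing digits $p-1$ roll over to $0$ and the next digit increments), and your handling of the all-digits-$p-1$ boundary case and the derivation of both equivalences from the sign of $1-(p-1)\,\pval_p(n+1)$ are sound. The Legendre-formula shortcut you mention is an equally valid one-line alternative.
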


\begin{lemma} \label{lem:divisor-1}
If $n \geq 1$, then
$$
  \lcm( \DD_n, \D_n ) \mid \lcm \bigl( \DD_{n+1}, \rad(n+1) \bigr).
$$
\end{lemma}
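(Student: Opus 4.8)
The plan is to exploit that all four quantities in sight are squarefree: $\DD_n$ and $\DD_{n+1}$ are products of distinct primes by \eqref{eq:Dn-formula}, $D_n$ has squarefree denominator by the von Staudt--Clausen theorem \eqref{eq:Bn-denom} (with $D_n \in \{1,2\}$ for odd $n$), and $\rad(n+1)$ is squarefree by definition. Hence both $\lcm(\DD_n, D_n)$ and $\lcm(\DD_{n+1}, \rad(n+1))$ are squarefree, and divisibility between them is equivalent to containment of prime divisors. So it suffices to show, for every prime $p$, that $p \mid \DD_n$ or $p \mid D_n$ implies $p \mid \DD_{n+1}$ or $p \mid n+1$. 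Throughout I would use the clean product formula $\DD_n = \prod_{s_p(n) \geq p} p$ from \eqref{eq:Dn-prod}, so that $p \mid \DD_n \iff s_p(n) \geq p$, and likewise $p \mid \DD_{n+1} \iff s_p(n+1) \geq p$.

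First I would dispose of the contribution of $\DD_n$. Fix a prime $p$ with $s_p(n) \geq p$. If $p \mid n+1$, then $p \mid \rad(n+1)$ and we are done. Otherwise $p \nmid n+1$, and the step relation \eqref{eq:sp-values-ndiv} gives $s_p(n+1) = s_p(n) + 1 \geq p+1 > p$, so $p \mid \DD_{n+1}$. This already establishes $\DD_n \mid \lcm(\DD_{n+1}, \rad(n+1))$.

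It remains to treat $D_n$, which is where the argument has real content. For odd $n$ this is immediate, since $D_n = 1$ for $n \geq 3$ while $D_1 = 2 = \rad(2)$ divides $\rad(n+1)$. For even $n \geq 2$, the von Staudt--Clausen formula \eqref{eq:Bn-denom} says that the primes dividing $D_n$ are exactly those with $p-1 \mid n$, so fix such a prime $p$. Once more, if $p \mid n+1$ we are done, so assume $p \nmid n+1$. The key step is to bound $s_p(n)$ from below: since $p-1 \mid n$, the digit-sum congruence \eqref{eq:sp-congr} gives $s_p(n) \equiv n \equiv 0 \pmod{p-1}$, and as $s_p(n) \geq 1$ this forces $s_p(n) \geq p-1$. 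Feeding this into \eqref{eq:sp-values-ndiv} yields $s_p(n+1) = s_p(n) + 1 \geq p$, so $p \mid \DD_{n+1}$, and the proof is complete.

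The main obstacle is precisely this last lower bound $s_p(n) \geq p-1$ for the primes with $p-1 \mid n$. The $\DD_n$ half of the argument carries a free unit of slack, coming from the strict threshold $s_p(n) \geq p$, whereas the $D_n$ half carries none; it is exactly the congruence $s_p(n) \equiv 0 \pmod{p-1}$ together with positivity of $s_p(n)$ that supplies the single extra digit needed to clear the threshold $s_p(n+1) \geq p$ after incrementing $n$. Everything else is a bookkeeping reduction to prime divisors via squarefreeness.
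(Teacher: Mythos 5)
Your proof is correct and follows essentially the same route as the paper's: reduce to prime divisors via squarefreeness, handle $p \mid \DD_n$ using \eqref{eq:sp-values-ndiv} and \eqref{eq:Dn-prod}, and handle $p \mid D_n$ for even $n$ via the congruence $s_p(n) \equiv n \equiv 0 \pmod{p-1}$ to get $s_p(n) \geq p-1$ and hence $s_p(n+1) \geq p$. No gaps; this matches the paper's argument step for step.
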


\begin{proof}
Set $L_n := \lcm \bigl( \DD_{n+1}, \rad(n+1) \bigr)$. Since $\DD_n$ and $\D_n$
are squarefree by~\eqref{eq:Dn-formula} and~\eqref{eq:Bn-denom}, we show that
$p \mid \lcm( \DD_n, \D_n )$ implies $p \mid L_n$.
Moreover, since $\rad(n+1) \mid L_n$, we may assume that $p \nmid n+1$.

If $p \mid \DD_{n}$, then by \eqref{eq:Dn-formula} we have $s_p(n) \geq p$.
Applying \eqref{eq:sp-values-ndiv} followed by \eqref{eq:Dn-prod}, we obtain
$p \mid \DD_{n+1}$, and finally $p \mid L_n$.

Since $\D_1 = 2$ by~\eqref{eq:Bn-values} and $\D_n = 1$ for odd $n \geq 3$,
we have $\D_n \mid L_n$ for odd $n \geq 1$. So take $n \geq 2$ even.
If $p \mid \D_n$, then $p-1 \mid n$ by \eqref{eq:Bn-denom}, so also
$p-1 \mid s_p(n)$ by \eqref{eq:sp-congr}. Thus $s_p(n) \geq p-1$.
As $p \nmid n+1$ by assumption, \eqref{eq:sp-values-ndiv} implies
$s_p(n+1) \geq p$, so $p \mid \DD_{n+1}$ by \eqref{eq:Dn-prod}.
Finally $p \mid L_n$. This proves the lemma.
\end{proof}

\begin{lemma} \label{lem:divisor-2}
If $n \geq 1$, then
$$
  \lcm \bigl( \DD_{n+1}, \rad(n+1) \bigr) \mid \lcm( \DD_n, \D_n ).
$$
\end{lemma}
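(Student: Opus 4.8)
The plan is to mirror the proof of Lemma~\ref{lem:divisor-1} but run the divisibility in the opposite direction, again reducing to a prime-by-prime comparison. Write $L_n := \lcm(\DD_n, D_n)$ and $R_n := \lcm \bigl( \DD_{n+1}, \rad(n+1) \bigr)$. All four of $\DD_n$, $\DD_{n+1}$, $D_n$, and $\rad(n+1)$ are squarefree (by \eqref{eq:Dn-formula}, \eqref{eq:Bn-denom}, and the definition of $\rad$), hence so are $L_n$ and $R_n$. Therefore it suffices to show that every prime $p$ dividing $R_n$ also divides $L_n$; that is, assuming $p \mid \DD_{n+1}$ or $p \mid n+1$, I must produce $p \mid \DD_n$ or $p \mid D_n$. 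The two main tools are the product formula \eqref{eq:Dn-prod}, which says $p \mid \DD_k \iff s_p(k) \geq p$, and the digit-sum recurrence $s_p(n+1) = s_p(n) + 1 - (p-1)\,\pval_p(n+1)$, used alongside the von Staudt--Clausen criterion \eqref{eq:Bn-denom}.

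First I would treat the case $p \mid n+1$. Setting $a := \pval_p(n+1) \geq 1$, the recurrence gives $s_p(n) = s_p(n+1) - 1 + (p-1)a \geq (p-1)a$, since $s_p(n+1) \geq 1$ for $n \geq 1$. If $a \geq 2$, then $s_p(n) \geq 2(p-1) \geq p$, so $p \mid \DD_n$ by \eqref{eq:Dn-prod}. If $a = 1$ and $s_p(n+1) \geq 2$, then $s_p(n) \geq p$ and again $p \mid \DD_n$. The only remaining subcase is $a = 1$ with $s_p(n+1) = 1$, which forces $n+1 = p$, i.e.\ $n = p-1$; here $p-1 \mid n$ and $n$ is even for odd $p$ (while $n = 1$ with $D_1 = 2$ by \eqref{eq:Bn-values} for $p = 2$), so $p \mid D_n$ by \eqref{eq:Bn-denom}.

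Next I would treat the case $p \mid \DD_{n+1}$ with $p \nmid n+1$. Then $s_p(n+1) \geq p$ by \eqref{eq:Dn-prod}, and \eqref{eq:sp-values-ndiv} gives $s_p(n) = s_p(n+1) - 1 \geq p-1$. If $s_p(n) \geq p$, then $p \mid \DD_n$ and I am done. Otherwise $s_p(n) = p-1$, and \eqref{eq:sp-congr} yields $n \equiv s_p(n) = p-1 \equiv 0 \pmod{p-1}$, so $p-1 \mid n$. Moreover $n$ is even: for odd $p$ because $p-1$ is even, and for $p = 2$ because $p \nmid n+1$ forces $n$ even. Thus $p \mid D_n$ by von Staudt--Clausen, completing this case and hence the lemma.

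The main obstacle I anticipate is handling the two boundary subcases cleanly, namely those where $s_p(n)$ lands exactly at the threshold $p-1$, just below the cutoff for $p \mid \DD_n$. In those subcases the divisibility cannot come from $\DD_n$ and must instead be extracted from $D_n$; the crucial observation is that the congruence \eqref{eq:sp-congr} forces $p-1 \mid n$, after which von Staudt--Clausen supplies $p \mid D_n$. Pinning down that $n$ is even in these subcases (so that \eqref{eq:Bn-denom} applies) and isolating the degenerate case $n = p-1$ are the delicate points, but both become routine once the digit-sum recurrence is in hand. Together with Lemma~\ref{lem:divisor-1}, this lemma yields the equality $\lcm(\DD_n, D_n) = \lcm \bigl( \DD_{n+1}, \rad(n+1) \bigr)$, which is what links \eqref{eq:DB-formula} and \eqref{eq:DB-formula2}.
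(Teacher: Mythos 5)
Your proof is correct, and its overall architecture matches the paper's: reduce to a prime-by-prime check using squarefreeness, split into the cases $p \mid n+1$ and $p \mid \DD_{n+1}$ with $p \nmid n+1$, and in the boundary subcase $s_p(n) = p-1$ pass from $\DD_n$ to $D_n$ via \eqref{eq:sp-congr} and von Staudt--Clausen. Your second case is essentially identical to the paper's. The one genuine difference is in the case $p \mid n+1$: the paper disposes of composite $n+1$ by citing the external result \eqref{eq:Dn-rad} (that $\rad(n+1) \mid \DD_n$ when $n+1$ is composite) and then treats prime $n+1$ separately via $p-1 = n \mid n$, whereas you rederive everything directly from \eqref{eq:Dn-prod} and the digit-sum recurrence, showing $s_p(n) \geq (p-1)\,\pval_p(n+1)$ and isolating $n+1 = p$ as the only subcase needing $D_n$. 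Your route is more self-contained (it needs only \eqref{eq:Dn-prod} from \cite{Kellner:2017}, not \eqref{eq:Dn-rad}, and in effect reproves the relevant piece of \eqref{eq:Dn-rad} in two lines), at the cost of a slightly longer case analysis; the paper's version is shorter on the page but leans on an additional imported theorem. Both correctly handle the degenerate points ($n=1$ with $D_1 = 2$, and the parity of $n$ needed to invoke \eqref{eq:Bn-denom}).
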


\begin{proof}
As $\DD_1 = \DD_2 = 1$, and $\D_1 = 2$ by~\eqref{eq:Bn-values}, the case
$n=1$ holds. So assume $n \geq 2$ and set $L_n := \lcm( \DD_n, \D_n )$.

If $n+1$ is not prime, then \eqref{eq:Dn-rad} implies $\rad(n+1) \mid L_n$.
Otherwise, $p = n+1 = \rad(n+1)$ is an odd prime and so $n$ is even.
By \eqref{eq:Bn-denom} we have $p \mid \D_n$, so $\rad(n+1) \mid L_n$.

It remains to show that $\DD_{n+1} \mid L_n$.
As $\DD_{n+1}$ is squarefree by \eqref{eq:Dn-formula},
it suffices to show for any prime $p \mid \DD_{n+1}$ that $p \mid L_n$.
By \eqref{eq:Dn-formula} again we have $s_p(n+1) \geq p$,
and as $\rad(n+1) \mid L_n$ we may assume that $p \nmid n+1$.
Then by \eqref{eq:sp-values-ndiv} we obtain $s_p(n) = s_p(n+1)-1 \geq p-1$.
If $s_p(n) \geq p$, then $p \mid \DD_n$ by \eqref{eq:Dn-prod}, so $p \mid L_n$.
Otherwise, $s_p(n) = p-1$ and so $p-1 \mid n$ by \eqref{eq:sp-congr}.
Moreover, $n$ must be even, as $n$ odd would imply $p=2$, contradicting
$p \nmid n+1$. Hence $p \mid \D_n$ by \eqref{eq:Bn-denom}, and finally
$p \mid L_n$. This completes the proof.
\end{proof}

\begin{proof}[Proof of Theorem~\ref{thm:main2}]
To show the equivalence, we have to prove that
$$
  \denom \bigl( \SP_{m,r}^n(x) \bigr) = 1
  \quad \iff \quad
  \DB_n \mid m.
$$
By \eqref{eq:sum-ap-denom-formula}, we have
$\denom \bigl( \SP_{m,r}^n(x) \bigr) = 1$ if and only if
$$
  \frac{n+1}{\gcd(n+1,m^n)} = \frac{\DD_{n+1}}{\gcd(\DD_{n+1},m)} = 1,
$$
which in turn is true if and only if $n+1 \mid m^n$ and $\DD_{n+1} \mid m$.
Moreover,
$$
  n+1 \mid m^n \quad \iff \quad \rad(n+1) \mid m.
$$
Indeed, $p \mid n+1 \mid m^n$ implies $p \mid m$, proving the ``$\Rightarrow$''
direction. Conversely, if $p \mid \rad(n+1) \mid m$, then $p^n \mid m^n$.
But $p^e \dmid n+1$ with $e \leq n$, so finally $n+1 \mid m^n$. It follows that
$$
  \denom \bigl( \SP_{m,r}^n(x) \bigr) = 1
  \quad \iff \quad
  \lcm \bigl( \DD_{n+1}, \rad(n+1) \bigr) \mid m.
$$
By Lemmas \ref{lem:divisor-1} and \ref{lem:divisor-2}, together with the proof
of \cite[Thm.~4]{Kellner&Sondow:2017}, we have
$$
  \lcm \bigl( \DD_{n+1}, \rad(n+1) \bigr) = \lcm( \DD_n, \D_n ) = \DB_n.
$$
This proves the theorem.
\end{proof}

\begin{proof}[Proof of Corollary~\ref{cor:Dn-relation}]
(i), (ii)
If $n \geq 3$ is odd, then $\D_n = 1$. Hence,
\eqref{eq:DB-formula} and \eqref{eq:DB-formula2} yield
$\DD_n = \lcm \bigl( \DD_{n+1}, \rad(n+1) \bigr)$.
Together with $\DD_1 = \DD_2 = 1$, this implies that
$\DD_{n+1} \mid \DD_{n}$ for all odd $n \geq 1$, as desired.

Similarly, for even $n \geq 2$, we have $\DB_{n+1} = \DD_{n+1}$ by
\eqref{eq:DB-formula}. Then \eqref{eq:DB-formula2} gives
$\DB_{n} = \lcm \bigl(\DB_{n+1}, \rad(n+1) \bigr)$, so $\DB_{n+1} \mid \DB_{n}$,
as claimed.
\end{proof}

%%%%%%%%%%%%%%%%%%%%%%%%%%%%%%%%%%%%%%%%%%%%%%%%%%%%%%%%%%%%%%%%%%%%%%%%%%%%%%%%
% SEC: PROOF
%%%%%%%%%%%%%%%%%%%%%%%%%%%%%%%%%%%%%%%%%%%%%%%%%%%%%%%%%%%%%%%%%%%%%%%%%%%%%%%%

\section{Proofs of \texorpdfstring{Theorems~\ref{thm:Dn-ratio1}}{Theorems~4}
and \texorpdfstring{\ref{thm:Dn-ratio2}}{5}}

Let $a, b \geq 2$ be integers that are multiplicatively independent,
that is, $a^e \neq b^f$ for all integers $e, f \geq 1$.
Senge and Straus \cite[Thm.~3]{Senge&Straus:1973} showed that
for a given constant $A$ the number of integers $n$ satisfying
$$
  s_a(n) + s_b(n) < A
$$
is finite. Steward \cite[Thm.~1, p.~64]{Stewart:1980} proved the effective
lower bound
\begin{equation} \label{eq:sum-digits}
  s_a(n) + s_b(n) > \frac{\log \log n}{\log \log \log n + C} - 1
\end{equation}
for $n > 25$, where $C > 0$ is an effectively computable constant depending
on~$a$ and~$b$. This bound leads to the following lemma.

\begin{lemma} \label{lem:sp-power}
If $p \neq q$ are primes, then
$$
  \lim_{k \to \infty} s_p \mleft( q^k \mright) = \infty.
$$
In particular, there exists a positive integer $L_{p,q} > \log_q p$ such that
$$
  s_p \mleft( q^k \mright) \geq p \quad (k \geq L_{p,q}).
$$
\end{lemma}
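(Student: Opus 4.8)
The plan is to deduce the lemma directly from the two results quoted just above it, the finiteness theorem of Senge--Straus and the effective bound of Stewart, applied with the two bases $a = p$ and $b = q$. First I would record the trivial but crucial observation that distinct primes $p \neq q$ are multiplicatively independent: by unique factorization, $p^e = q^f$ is impossible for $e, f \geq 1$. This is exactly the hypothesis needed to make the quoted theorems applicable to the pair $(p,q)$.

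The second, equally elementary, ingredient is that the base-$q$ expansion of $q^k$ is the single digit $1$ followed by $k$ zeros, so that $s_q(q^k) = 1$ for every $k \geq 0$. I would then evaluate the Senge--Straus estimate at $n = q^k$: since $q^k \to \infty$ as $k \to \infty$, the finiteness statement (for every constant $A$ only finitely many $n$ satisfy $s_p(n) + s_q(n) < A$) forces $s_p(q^k) + s_q(q^k) \to \infty$. Subtracting the constant contribution $s_q(q^k) = 1$ yields $s_p(q^k) \to \infty$, which is the first assertion. Using Stewart's explicit inequality in place of Senge--Straus gives the same conclusion together with an effectively computable threshold, should one prefer $L_{p,q}$ to be explicit rather than merely existent.

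For the ``in particular'' clause, the divergence $s_p(q^k) \to \infty$ guarantees an index $K$ beyond which $s_p(q^k) \geq p$. To also meet the side condition $L_{p,q} > \log_q p$ needed later, I would simply take $L_{p,q} := \max\{ K, \lfloor \log_q p \rfloor + 1 \}$, which is a positive integer exceeding $\log_q p$ and still ensures $s_p(q^k) \geq p$ for all $k \geq L_{p,q}$. There is essentially no real obstacle here: the entire statement is a short corollary of the cited digit-sum bounds, and the only point to watch is that $s_q(q^k)$ is a bounded (indeed constant) term that drops out, so that the growth of the sum $s_p(q^k) + s_q(q^k)$ is carried entirely by $s_p(q^k)$.
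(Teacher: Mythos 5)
Your proposal is correct and follows exactly the route the paper intends: the lemma is presented as an immediate consequence of the Senge--Straus finiteness theorem (or Stewart's effective bound) applied to the multiplicatively independent pair $p \neq q$, with the observation that $s_q(q^k)=1$ is constant so the divergence of the sum is carried by $s_p(q^k)$. Your explicit choice $L_{p,q} := \max\{K, \lfloor \log_q p \rfloor + 1\}$ cleanly handles the side condition $L_{p,q} > \log_q p$, which the paper leaves implicit.
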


\begin{proof}
By taking $a=p$, $b=q$, $k \geq 5$, and $n=q^k > 25$,
we derive from~\eqref{eq:sum-digits} that
$$
  s_p(q^k) > \frac{\log(k \log q)}{\log \log(k \log q) + C} - 2 =: f(k)
$$
with some constant $C > 0$ depending on~$p$ and~$q$. Since $f(k)$ is strictly
increasing for all sufficiently large $k$, we infer that
$\lim_{k \to \infty} s_p \mleft( q^k \mright) = \infty$.
Therefore, there exists a positive integer $L_{p,q}$
such that $s_p \mleft( q^k \mright) \geq p$ for $k \geq L_{p,q}$.
On the other hand, since $s_p(m) = m$ for $0 \leq m < p$,
we have that $s_p(q^k) < p$ for $1 \leq k < \log_q p$,
implying that $L_{p,q} > \log_q p$, as claimed.
\end{proof}

\begin{proof}[Proof of Theorem~\ref{thm:Dn-ratio1}]
If $n=1$, then $\DD_{1} / \DD_{2} = 1$. By \eqref{eq:Dn-odd},
if $n \geq 3$ is odd and $n+1$ is not a power of $2$, then $\DD_{n}$ and
$\DD_{n+1}$ are both even. Since by \eqref{eq:Dn-formula} they are squarefree,
$\DD_{n} / \DD_{n+1}$ must be odd.

Likewise, if $n = 2^k - 1$ for some $k \geq 2$, then $\DD_{n} / \DD_{n+1}$
must be twice an odd number. If an odd prime $p$ divides $\DD_{n}$, then
$s_p(n) \geq p$ by \eqref{eq:Dn-formula}. Since $p \nmid 2^k = n+1$, we infer
by \eqref{eq:sp-values-ndiv} that $s_p(n+1) > p$. Hence by \eqref{eq:Dn-prod}
the prime $p$ also divides $\DD_{n+1}$, so indeed $\DD_{n} / \DD_{n+1} = 2$.

Now, let $n = 2^\ell p^k - 1$ with $p$ an odd prime and $k, \ell \geq 1$.
Then we have $\rad(n+1) = 2p$, and by \eqref{eq:Dn-odd} that $\DD_{n}$ and
$\DD_{n+1}$ are both even. Thus, $\DD_{n} / \DD_{n+1} \in \{ 1, p \}$
by Corollary~\ref{cor:Dn-relation} part~(i). We consider two cases.

Case $1 \leq \ell < \log_2 p$:
Since $s_p(n+1) = s_p(2^\ell) < p$, we infer that $p \nmid \DD_{n+1}$
by \eqref{eq:Dn-prod} implying $\DD_{n} / \DD_{n+1} = p$.

Case $\ell > \log_2 p$:
Lemma~\ref{lem:sp-power} implies a constant $\LB_p := L_{p,2} > \log_2 p$
such that $s_p(n+1) = s_p(2^\ell) \geq p$ for all $\ell \geq \LB_p$.
Hence $p \mid \DD_{n+1}$ by \eqref{eq:Dn-prod} and $\DD_{n} / \DD_{n+1} = 1$.
This proves the theorem.
\end{proof}

\begin{proof}[Proof of Theorem~\ref{thm:Dn-ratio2}]
It is shown in \cite[Thm.~4]{Kellner&Sondow:2017} that $\DB_{n}$ is even and
squarefree for all $n \geq 1$. (This also follows from \eqref{eq:DB-formula}
for even $n \geq 2$, since $2 \mid \D_n$, and from \eqref{eq:DB-formula2} for
odd $n \geq 1$, since $2 \mid \rad(n+1)$, all terms in question being squarefree.)
Hence if $n \geq 2$ is even, so that $\DB_{n+1} \mid \DB_{n}$, then the quotient
must be odd.

Let $p$ be an odd prime. If $n = p^k-1$ for some $k \geq 1$, then we have
$\rad(n+1) = p$ and $s_p(n+1) = s_p(p^k) = 1 < p$. Thus $p \nmid \DD_{n+1}$
by \eqref{eq:Dn-prod}. Since $n$ is even, we have $\DB_{n+1} = \DD_{n+1}$ by
\eqref{eq:DB-formula} and so $p \nmid \DB_{n+1}$. By Corollary~\ref{cor:Dn-relation}
part~(i) we finally obtain $\DB_{n} / \DB_{n+1} = p$.

Now, let $p \neq q$ be odd primes and $n = p^k q^\ell - 1$ with $k, \ell \geq 1$.
We then have $\rad(n+1) = pq$ and by Corollary~\ref{cor:Dn-relation} part~(i)
that $\DB_{n} / \DB_{n+1} \in \{ 1, p, q, pq \}$.
Note that $s_p(n+1) = s_p(q^\ell)$ and $s_q(n+1) = s_q(p^k)$.
By Lemma~\ref{lem:sp-power} we define $\LB'_{p,q} := L_{q,p} > \log_p q$ and
$\LB''_{p,q} := L_{p,q} > \log_q p$.
We consider the following statements by using \eqref{eq:Dn-prod}:

If $1 \leq \ell < \log_q p$, then $s_p(q^\ell) < p$ and $p \nmid \DD_{n+1}$.
Otherwise, if $\ell \geq \LB''_{p,q}$, then $s_p(q^\ell) \geq p$ and $p \mid \DD_{n+1}$.

If $1 \leq k < \log_p q$, then $s_q(p^k) < q$ and $q \nmid \DD_{n+1}$.
Otherwise, if $k \geq \LB'_{p,q}$, then $s_q(p^k) \geq q$ and $q \mid \DD_{n+1}$.

All three cases of the theorem follow from the arguments given above,
since $\DB_{n+1} = \DD_{n+1}$. This completes the proof of the theorem.
\end{proof}

%%%%%%%%%%%%%%%%%%%%%%%%%%%%%%%%%%%%%%%%%%%%%%%%%%%%%%%%%%%%%%%%%%%%%%%%%%%%%%%%
% SEC: CONCLUSION
%%%%%%%%%%%%%%%%%%%%%%%%%%%%%%%%%%%%%%%%%%%%%%%%%%%%%%%%%%%%%%%%%%%%%%%%%%%%%%%%

\section{Conclusion}

The numbers
$$
  \BP_{m,r}^n = m^n \left( B_n \mleft( \frac{r}{m} \mright) - B_n \right),
$$
shown by Almkvist and Meurman to be integers, play here a key role in proofs.
By their result, the polynomial $B_n(x)-B_n$, with an extra factor, takes integer
values at rational arguments $x = r/m$. In the present paper, the numbers
$\BP_{m,r}^n$ reveal their natural connection with the power sums of arithmetic
progressions $\SP_{m,r}^n(x)$. Moreover, the divisibility properties of
$\BP_{m,r}^n$ are important in attaining our results in Theorems~\ref{thm:main}
and~\ref{thm:main2}.

%%%%%%%%%%%%%%%%%%%%%%%%%%%%%%%%%%%%%%%%%%%%%%%%%%%%%%%%%%%%%%%%%%%%%%%%%%%%%%%%
% ACKNOWLEDGMENT
%%%%%%%%%%%%%%%%%%%%%%%%%%%%%%%%%%%%%%%%%%%%%%%%%%%%%%%%%%%%%%%%%%%%%%%%%%%%%%%%

\section*{Acknowledgment}
We thank the anonymous referee for several suggestions.

\newpage

%%%%%%%%%%%%%%%%%%%%%%%%%%%%%%%%%%%%%%%%%%%%%%%%%%%%%%%%%%%%%%%%%%%%%%%%%%%%%%%%
% BIBLIOGRAPHY
%%%%%%%%%%%%%%%%%%%%%%%%%%%%%%%%%%%%%%%%%%%%%%%%%%%%%%%%%%%%%%%%%%%%%%%%%%%%%%%%

\end{document}